\definecolor{NoteColor}{rgb}{1,0,0}
\renewcommand{\textsc}{\textcolor{red}}
\newtheorem{theorem}{\rm\bf Theorem}[section]
\newtheorem{proposition}[theorem]{\rm\bf Proposition}
\newtheorem{lemma}[theorem]{\rm\bf Lemma}
\newtheorem{corollary}[theorem]{\rm\bf Corollary}
\newtheorem*{theorem 1}{\rm\bf Proposition 1}
\newtheorem*{theorem 2}{\rm\bf Proposition 2}
\theoremstyle{definition}
\theoremstyle{remark}
\newtheorem{question}[theorem]{\rm\bf Question}
\def\interieur#1{\mathord{\mathop{\kern 0pt #1}\limits^\circ}}
\title[Finsler structure]
{On the Finsler stucture of the Teichm\"uller metric and Thurston's asymmetric metric}
\author{A. Papadopoulos}
\address{Athanase Papadopoulos,  Universit{\'e} de Strasbourg and CNRS,
7 rue Ren\'e Descartes,
 67084 Strasbourg Cedex, France}
\email{athanase.papadopoulos@math.unistra.fr}
\author{W.  Su}
\address{Weixu Su, Department of Mathematics, Fudan University, 200433, Shanghai, P. R. China, and Universit\'e de Strasbourg and CNRS, 7 rue Ren\'e Descartes,
67084 Strasbourg Cedex, France}
\email{suweixu@gmail.com}
\date{\today}
\begin{document}

\begin{abstract}
We highlight several analogies between the Finsler (infinitesimal) properties of Teichm\"uller's metric and Thurston's asymmetric metric on Teichm\"uller space. Thurston defined his asymmetric metric in analogy with Teichm\"ullers' metric, as a solution to an extremal problem, which consists, in the case of the asymmetric metric, of finding the best Lipschitz maps in the hoomotopy class of homeomorphisms between two hyperbolic surface. (In the Teichm\"uller metric case, one searches for the best quasiconformal map between two conformal surfaces.) It turns out also that some properties of Thurston's asymmetric metric can be used to get new insight into Teichm\"uller's metric. In this direction, in analogy with Thurston's formula for the Finsler norm of a vector for the asymmetric metric that uses the hyperbolic length function, we give a new formula for the Finsler norm of a vector for the Teichm\"uller metric that uses the extremal length function. We also describe an embedding of projective measured  foliation space in the cotangent space to Teichm\"uller space whose image is the boundary of the dual of the unit ball in the tangent space representing vectors of norm one for the Finsler structure associated to the Teichm\"uller metric.

\end{abstract}

\maketitle


\noindent AMS Mathematics Subject Classification:   32G15; 30F60; 57M50; 57N05.
\medskip

\noindent Keywords: Teichm\"uller space; Thurston's asymmetric metric; Teichm\"uller metric; Finsler norm.
\medskip


\section{Introduction}\label{intro}
Let $S=S_{g,n}$ be an oriented surface of genus $g\geq 0$ with $n\geq 0$ punctures. We assume that $3g-3+n > 0$.
The Teichm\"uller space
$\mathcal{T}(S)$ of $S$ is the space of complex structures (or, equivalently, of complete and finite-area hyperbolic structures) on $S$
up to equivalence, where two such structures $X$ and $Y$ are considered as equivalent if there is a conformal map (respectively, an isometry) $h:(S,X) \to (S,Y)$ which is
homotopic to the identity map of $S$.

There are several natural metrics on $\mathcal{T}(S)$; some of them are Riemannian and others are only Finsler. Some metrics on Teichm\"uller space (e.g. the Weil-Petersson metric, and the Teichm\"uller metric) are strongly related to the complex structure of that space. A Finsler metric is a length metrics where the distance between two points is defined by minimizing lengths of peicewise $C^1$ paths joining them and where the length of a path is computed by integrating norms of tangent vectors. The fact that a metric is Finsler but not Riemannian means that the norm function on each tangent space is not associated to a scalar product. In this paper, we consider the Teichm\"uller metric and Thurston's asymmetric metric. Both metrics are Finsler, and in the case of Thurston's asymmetric metric, the Finsler norm not only is non-Riemannian but it is even not symmetric. These metrics are defined in terms of distances between geometric structures on the surface (conformal structures, in the case of the Techm\"uller metric, and hyperbolic structures, in the case of Thurston's asymmetric metric). The question of comparing the various metrics on $\mathcal{T}(S)$ is a natural one, and it was also suggested by Thurston in \cite{Thurston}; see also \cite{PT}.

We shall recall below the definition of Thurston's asymmetric metric.
Thurston formulated several ideas concerning this asymmetric metric, and we shall review some of them below. He proved some of the major results and he outlined some other results in his paper \cite{Thurston}. After Thurston's paper was circulated, questions on the limiting behavior of stretch lines  and of anti-stretch lines were considered, see \cite{P1}, \cite{PT}, \cite{T3} and \cite{T2}. Stretch lines are some special geodesics of that metric, and anti-stretch lines are stretch lines traversed in the opposite directions; we note that since the metric is not symmetric, a geodesic traversed in the opposite direction is not necessarily a geodesic. There were also generalizations of Thurston's asymmetric metric to the case of surfaces with boundary, see \cite{LSPT1} and \cite{LSPT2}. A renewal of interest in this metric has emerged recently, see e.g. the papers  \cite{Walsh}, \cite{LRT} and \cite{LPST}. It became also clear  that among the known metrics on Teichm\"uller space, the Thurston metric  is the one that has an interesting analogue on outer space, see \cite{CV}. Thus, there are very good reasons to study Thurston's asymmetric metric.
Some analogies and some differences between Thurston's asymmetric metric and Teichm\"uller's metric have described in the paper \cite{PT}. In the present paper, we point out new analogies that concern the Finsler character of these metrics.

Given a set $X$, a function $d:X\to X$ is said to be a \emph{weak metric} on $X$ if it satisfies all the axioms of a distance function except the symmetry axiom. The weak metric $d$ is said to be \emph{asymmetric} if it is strictly weak, that is if there exist two points $x$ and $y$ in $X$ such that $d(x,y)\not=d(y,x)$.

A related notion is that of a \emph{weak norm} on a vector space $E$. This is a function $p:E\to \mathbb{R}$  that is nonnegative, convex and positively homegeneous. In other words, $p$ satisfies the following:
\begin{enumerate}\label{weak-norm}
\item $p(x)\geq 0$ for all $x\in E$;
\item \label{wn1} $p(\lambda x)=\lambda p(x)$ for all $x\in E$ and for all $\lambda\geq 0$;
\item \label{wn2} $p(x_1+x_2) \leq p(x_1)+ p(x_2)$ for all $x_1,x_2 \in E$.
\end{enumerate}

In this paper, to simplify terminology, we shall use in general the term \emph{norm} to denote a weak norm a,d the terms \emph{metric} to denote a weak metric.

Let $X$ and $Y$ be now two complete hyperbolic metrics on $S$.
In the paper \cite{Thurston}, Thurston defined an asymmetric metric $d_L$ on  $\mathcal{T}(S)$ by setting
\begin{equation} \label{eq:L}
d_L(X,Y)=\inf_{f} \log L_f(X,Y),
\end{equation}
where the infimum is taken is over all homeomorphsims $f:X \to Y$ homotopic to the indentity map of $S$, and where $L_f(X,Y)$ is the Lipschitz constant of $f$, that is,
\begin{displaymath}\label{Lip}
\hbox{Lip}(f)=\sup_{x\neq y\in S}\frac{d_{Y}\big{(}f(x),f(y)\big{)}}{d_{X}\big{(}x,y\big{)}}.
\end{displaymath}

We shall call this weak metric Thurston's asymmetric metric or, for short, Thurston's metric.

In the same paper, Thurston proved  that  there is a (non-necessarily unique) extremal Lipschitz homeomorphsim that realizes the infimum in (\ref{eq:L}). He also proved that we have the following formula for the asymmetric metric:
\begin{equation} \label{eq:LL}d_L(X,Y)=\log \sup_{\gamma}  \frac{\ell_Y(\gamma)}{\ell_X(\gamma)},
\end{equation}
where $\ell_X(\gamma)$ denotes the hyperbolic length of $\gamma$ with respect to the metric $X$ and $\gamma$ ranges over all essential simple closed curves on  $S$.

Furthermore, Thurston proved that the asymmetric metric defined in (\ref{eq:L}) is Finsler, that is, it is a length metric which is defined by integrating a weak norm on the tangent bundle of $\mathcal{T}(S)$ along paths in Teichm\"uller space, and taking the minimum lengths over all peicewise $C^1$-paths. Thurston gave an explicit formula for the weak norm of a tangent vector $V$ at a point $X$ in $\mathcal{T}(S)$, namely,

\begin{equation}\label{eq:norm-L}
\Vert V \Vert_L= \sup_{\lambda \in \mathcal{ML}} \frac{d \ell_{\lambda}(V)}{\ell_{\lambda}(X)}.
\end{equation}
Here, $\mathcal{ML}=\mathcal{ML}(S)$ is the space of measured laminations on $S$, $\ell_{\lambda}:\mathcal{T}(S)\to \mathbb{R}$ is the length function on Teichm\"uller space associated to the measured lamination $\lambda$ and $d \ell_{\lambda}$ is the differential of the function $\ell_{\lambda}(X)$ at the point $X\in \mathcal{T}(S)$.

For $X$ in $\mathcal{T}(S)$ and $\lambda$ in $\mathcal{ML}$, we shall use the notation $\ell_\lambda(X)$ or $\ell_X(\lambda)$ to denote the $X$-length of $\lambda$, depending on whether we consider the length function as a function on Teichm\"uller space or on measured lamination space.
		
We shall present an analogue of Formula (\ref{eq:norm-L})
for the Teichm\"uller metric that is expressed in terms of extremal length, namely, we show that the Finsler weak norm associated to the Teichm\"uller distance is given by the following formula:

\begin{equation}\label{eq:norm-T}
\Vert V\Vert_E= \sup_{\lambda \in \mathcal{MF}} \frac{d \mathrm{Ext}^{1/2}_{\lambda}(V)}{ \mathrm{Ext}^{1/2}_{\lambda}(X)} .
\end{equation}
Here, $X$ is a conformal structure on $S$, considered as a point in Teichm\"uller space, $V$ is a tangent vector at the point $X$, $\mathcal{MF}=\mathcal{MF}(S)$ is the space of measured laminations on $S$,  $\mathrm{Ext} _{\lambda}:\mathcal{T} \to \mathbb{R}$ is the extremal length function associated to the measured foliation $\lambda$ and $d\mathrm{Ext} _{\lambda}$ is the differential of that function at $X$.

There is a more geometric version of (\ref{eq:norm-T}) in which the tangent vector $V$ is interpreted as a Beltrami differential (see Corollary \ref{coro:teich} below).

The following table summarizes some analogies between notions and results associated to Thurston's asymmetric metric and those associated to the
Teichm\"uller metric.
\bigskip
\begin{center}
\begin{tabular}{|c | l | l |}
\hline
{} & Thurston's metric & Teichm\"uller metric   \\
\hline
(i) & Stretch maps  & Teichm\"uller extremal maps    \\
\hline
(ii) & Stretch lines  & Teichm\"uller lines    \\
\hline
(iii) & $d_L(X,Y)=\displaystyle \log \inf_f L(f)$ & $ d_T(X,Y)= \displaystyle\log \inf_f K(f) $\\
\hline
(iv) & $d_L(X,Y)=\log \displaystyle \sup_{\gamma} \displaystyle \frac{\ell_\gamma(Y)}{\ell_\gamma(X)}$ & $d_T(X,Y)= \log \displaystyle \sup_{\gamma}\displaystyle \frac{\mathrm{Ext}^{1/2}_{\gamma}(Y)}{\mathrm{Ext}^{1/2}_{\gamma}(X)}$ \\
\hline
(v) & $\| V \|_L= \displaystyle \sup_{\lambda \in \mathcal{ML}} \displaystyle \frac{d \ell_{\lambda}(V)}{\ell_{\lambda}(X)}$ & $\| V \|_T= \displaystyle \sup_{\lambda \in\mathcal{MF}} \displaystyle \frac{d \mathrm{Ext}^{1/2}_{\lambda}(V)}{\mathrm{Ext}^{1/2}_{\lambda}(X)} $   \\
\hline

 (vi) & Thurston cataclysm coordinates  & The homeomorphism    \\
&  $X \in \mathcal{T} (S) \mapsto F_\mu(X)\in\mathcal{MF}(\mu)$ & $  X\in \mathcal{T} (S) \mapsto F_h(\Phi_F(X))\in\mathcal{MF}(F)$   \\
\hline
(vii) & $\ell_\lambda(X)=i(\lambda, F_{\lambda^*}(X)) $ & $\mathrm{Ext}_{\lambda}(X)=i(\lambda, F_h(\Phi_{\lambda}(X) ))$    \\
\hline
(viii) & $d \ell_{\gamma}(\mu)=\displaystyle \frac{2}{\pi} \mathrm{Re} <\Theta_\alpha, \mu>$  & $d \mathrm{Ext}_{\lambda}(\mu)= -2 \mathrm{Re} <\Phi_\lambda, \mu>$    \\
\hline
 (ix) & The horofuction boundary is  & The horofuction boundary  is    \\
& Thurston's boundary & Gardiner-Masur's boundary   \\
\hline
  (x) & $\mathcal{L}_X(\lambda)=\displaystyle\frac{\ell_X(\lambda)}{L_X}$ for $\lambda\in\mathcal{ML}$ & $\mathcal{E}_X(\lambda)=\displaystyle\frac{Ext^{1/2}_X(\lambda)}{K^{1/2}_X}$ for $\lambda\in\mathcal{ML}$   \\
\hline

\end{tabular}
\end{center}
\medskip
Most of the entries in this table are well know, some of them are known but need explanation, and some of the them are new and proved in this paper.
We now make a few comments on all the entries.
\medskip

(i) Stretch maps arise from the extremal problem of finding the best Lipschitz constant of maps homotopic to the identity between two hyperbolic structures on a surface $S$, in much the same way as Teichm\"uller maps arise from the extremal problem of finding the best quasiconformal constant of maps homotopic to the identity between two complex structures on $S$.

(ii) Stretch lines are geodesics for Thurston's metric. A stretch line is determined by a pair $(\mu,F)$ where $\mu$ is a complete lamination (not necessarily measured) and $F$ a measured foliation transverse to $\mu$.  Thurston proved that any two points in Teichm\"uller space can be joined by a geodesic which is a concatenation of stretch lines, but in general such a geodesic is not unique. Furthermore, there exist geodesics for Thurston's metric that are not concatenations of stretch lines. This contrasts with Teichm\"uller's theorem establishing that existence and uniqueness of geodesics joining any two distinct points, cf. \cite{T1939},
\cite{T1943} and Ahlfor's survery  \cite{Ahlfors-ICM1978}.

(iii) The left hand side is Thurston's definition of Thurston's metric, the infimum is over all homeomorphisms $f$ homotopic to the identity and $L(f)$ is the Lispschitz constant of such a homeomorphism. The right hand side is the definition of  the Teichm\"uller metric, the infimum is over all quasiconformal homeomorphisms $f$ homotopic to the identity and $K(f)$ is the dilatation of such a homeomorphism.

(iv) The left hand side is another expression (also due to Thurston) of Thurston's metric, and the right hand side is Kerckhoff's formula for the Teichm\"uller metric.

(v) Here, $V$ is a tangent vector to Teichm\"uller space at a point $X$. The left hand side formula is due to Thurston \cite{Thurston}; it is the infinitesimal form of Thurston's metric. The right hand side is an infinitesimal form of the  Teichm\"uller metric, and it is proved below (Theorem \ref{thm:teich}).

(vi) On the left hand side, $\mu$ is a complete geodesic lamination and the range of the map, $\mathcal{MF}(\mu)$, is the subspace of $\mathcal{MF}$ of equivalence classes of measured foliations that are transverse to $\mu$.
On the right hand side, $F$ is a measured foliation, and the range of the map, $\mathcal{MF}(F)$, is the subspace of $\mathcal{MF}$ consisting of equivalence classes of measured foliations that are transverse to $F$. The left hand side map is a homeomorphism defined in Thurston's paper \cite{Thurston}. The right hand side map is a homeomorphism that arises from the fact that a pair $F_1,F_2$ of measured foliations determines a unique point $X\in\mathcal{T}(S)$ and a unique quadratic differential $\Phi$ on $X$ such that $F_1$ ad $F_2$ are measure equivalent to the vertical and horizontal foliations of $\Phi$.
 Here, $F_h(\Phi_F(X))$ is the (equivalence class of the)  horizontal foliation of the quadratic differential on the Riemann surface $X$ having $F$ as vertical foliation.

(vii) The left hand side is an expression of the length of a complete lamination $\lambda$ as the geometric intersection with the horocyclic foliation associated to a completion $\lambda^*$ of $\lambda$. The formula is proved in \cite{P1} for the case where $\lambda$ is complete. The case where $\lambda$ is not complete follows easily from the geometric arguments used in that proof. The right hand side formula is due to  Kerckhoff, see Lemma \ref{lem:Ker} below.

(viii)   In the left hand side formula, $\mu$ is a Beltrami differential. The formula was given by Gardiner \cite{Gardiner1}, and in this formula $\alpha$ is (the homotopy class of) a simple closed curve and $\Theta_\alpha$  is the Poincar\'e series of $\alpha$, that is, a quadratic differential on the hyperbolic surface $X$ defined by $$\Theta_\alpha=\sum_{B\in <A>\setminus \Gamma}B^*(\frac{dz}{z})^2$$
where $\Gamma\subset \mathrm{PSL}(2,\mathbb{R})$ is a Fuchsian group associated to $X$ acting on the upper half-plane $\mathbb{H}^2$ and $A(z)=e^{\ell_\alpha}z$ is the deck transformation corresponding to the  simple closed geodesic $\alpha$.
The right hand side formula is called Gardiner's extremal length variational formula (see Lemma \ref{lem:Gardiner} below).

(ix) Walsh showed in \cite{Walsh} that the horofuction boundary of Thurston's metric is canonically identified with Thurston's boundary. Liu and Su showed in \cite{LS1} that the horofuction boundary of the Teichm\"uller metric is canonically identified with Gardiner-Masur's boundary.

(x) Here, we choose a basepoint  $X_0\in \mathcal{T}(S)$ and we fix a  complete geodesic lamination $\mu$ on $S$. For each $X \in \mathcal{T}(S)$, $L_X$  is the Lipschitz constant of the extremal Lipschitz map between $X_0$  and $X$ and $K_X$ is the quasiconformal dilatation of the extremal quasiconformal map between $X_0$ and $X$.

We illustrate the use of the functions $\mathcal{L}_X$ and $\mathcal{E}_X$. We use Thurston's homeomorphism $\phi_\mu:  \mathcal{T} (S) \to MF(\mu)$ that maps each $ X\in \mathcal{T}(S)$ to the horocylic foliation $F_\mu(X)$. The definition of this homeomorphism is recalled in \S \ref{s:Lipschitz} below. We denote the projective class of a measured foliation $F$ by $[F]$.
By a result in \cite{P1}, a sequence $(X_k)_{k\geq 1}$  in $ \mathcal{T}(S)$ coverges to a limit $[F]\in \mathcal{PMF}$ if and only if  $F_\mu(X_k)$ tends to infinity and $[F_\mu(X_k)]$ coverges to $[F]$.
We prove the following:
\begin{proposition}
If a sequence $(X_k)_{k\geq 1}$  in $ \mathcal{T}(S)$ coverges to a limit $[F]\in \mathcal{PMF}$,
then  $\mathcal{L}_{X_k}(\cdot)$ converges to $[F]$ in the following sense:
\medskip
\\
 $(\star)$    up to a subsequence, $\mathcal{L}_{X_k}(\cdot)$ converges to a positive multiple of $i(F,\cdot )$ uniformly on any compact subset of $ \mathcal{ML}$.
\end{proposition}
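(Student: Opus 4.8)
The plan is to first rewrite $\mathcal{L}_X$ entirely in terms of the hyperbolic length function, and then to read off the limit from the convergence $X_k\to[F]$. Since $L_X$ is the Lipschitz constant of the extremal map from $X_0$ to $X$, we have $\log L_X=d_L(X_0,X)$, so Thurston's formula (\ref{eq:LL}) gives $L_X=\sup_{\gamma}\ell_X(\gamma)/\ell_{X_0}(\gamma)$, the supremum over essential simple closed curves. Because $\ell_X$ and $\ell_{X_0}$ are continuous and positively homogeneous of degree one on $\mathcal{ML}$, and weighted simple closed curves are dense there, this supremum equals $\sup_{\lambda\in\Sigma}\ell_X(\lambda)$, where $\Sigma=\{\lambda\in\mathcal{ML}:\ell_{X_0}(\lambda)=1\}$ is compact (properness of $\ell_{X_0}$ on $\mathcal{ML}$) and meets every ray of $\mathcal{ML}\setminus\{0\}$ exactly once. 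Hence, for every $X$,
\[
\mathcal{L}_X(\cdot)=\frac{\ell_X(\cdot)}{\sup_{\Sigma}\ell_X},
\]
i.e. $\mathcal{L}_X$ is the unique positive rescaling of $\ell_X$ whose maximum over $\Sigma$ equals $1$. By homogeneity, proving $(\star)$ for all compact subsets of $\mathcal{ML}$ is equivalent to proving uniform convergence of $\mathcal{L}_{X_k}$ on the single compact set $\Sigma$.

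Next I would bring in the hypothesis. That $X_k\to[F]$ in $\mathcal{PMF}$ means, after fixing a representative $F$ of the class, that there are constants $t_k>0$ with $t_k^{-1}\ell_{X_k}(\gamma)\to i(F,\gamma)$ for each simple closed curve $\gamma$; by the standard continuity properties of Thurston's compactification — equivalently, via the characterization recalled just before the statement (convergence expressed through the horocyclic foliations $F_\mu(X_k)$) together with the joint continuity of the geometric intersection form — this convergence may be taken uniform on compact subsets of $\mathcal{ML}$. Since $F\neq 0$, the continuous function $i(F,\cdot)$ is not identically zero on $\Sigma$ (it is positive at $\lambda/\ell_{X_0}(\lambda)\in\Sigma$ for any $\lambda$ with $i(F,\lambda)>0$), so $m:=\sup_{\Sigma}i(F,\cdot)$ satisfies $0<m<\infty$, and uniform convergence on the compact set $\Sigma$ forces $t_k^{-1}\sup_{\Sigma}\ell_{X_k}\to m$.

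Putting the pieces together, on any compact subset of $\mathcal{ML}$,
\[
\mathcal{L}_{X_k}(\cdot)=\frac{t_k^{-1}\ell_{X_k}(\cdot)}{t_k^{-1}\sup_{\Sigma}\ell_{X_k}}\;\longrightarrow\;\frac{1}{m}\,i(F,\cdot),
\]
the convergence being uniform because the numerator converges uniformly while the denominator is a sequence of positive numbers tending to $m>0$; as $1/m>0$, this is exactly the assertion $(\star)$. (It also follows that $L_{X_k}=\sup_{\Sigma}\ell_{X_k}\to\infty$, since $t_k\to\infty$: the lengths of a fixed filling system of curves cannot remain bounded while $X_k$ leaves every compact set of $\mathcal{T}(S)$.) The only genuinely delicate step is the passage from the defining projective, pointwise convergence $X_k\to[F]$ to \emph{uniform} convergence of the rescaled length functions on compacta of $\mathcal{ML}$; this is the place where a subsequence may be needed, in accordance with $(\star)$, and one may instead run the argument with $i(F_\mu(X_k),\cdot)$ in place of $\ell_{X_k}$, the main labour of that variant being to control Thurston's comparison between $\ell_{X_k}(\gamma)$ and $i(F_\mu(X_k),\gamma)$ uniformly as $X_k\to\infty$. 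Everything else is a formal consequence of the displayed identity for $\mathcal{L}_X$.
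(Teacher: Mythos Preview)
Your argument is correct and in fact slightly sharper than the paper's: once the $t_k$ are fixed you obtain convergence of the whole sequence $\mathcal{L}_{X_k}$ to the specific function $i(F,\cdot)/\sup_{\Sigma}i(F,\cdot)$, with no subsequence needed. The route, however, is different. The paper does not rewrite $L_X$ as $\sup_{\Sigma}\ell_X$ and divide; instead it passes through the horocyclic foliations $F_k=F_\mu(X_k)$ and invokes the Fundamental Lemma of \cite{P1}, namely the uniform additive comparison $i(F_k,\lambda)\le \ell_\lambda(X_k)\le i(F_k,\lambda)+C$, to transfer the projective convergence $c_kF_k\to F$ into statements about $\ell_\lambda(X_k)$. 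From this it squeezes $c_kL_k$ between two bounded positive quantities and then extracts a convergent subsequence; only at the very end does it invoke the equivalence of pointwise and locally uniform convergence on $\mathcal{ML}$. Your approach front-loads that equivalence (the ``delicate step'' you flag), treating the locally uniform convergence of $t_k^{-1}\ell_{X_k}$ to $i(F,\cdot)$ as a known feature of Thurston's compactification, and then everything else is a clean sup-and-quotient argument on the compact section~$\Sigma$. What the paper's approach buys is an explicit, self-contained derivation from the tools already set up in \cite{P1}; what yours buys is brevity, an identified limit, and freedom from the auxiliary lamination~$\mu$.
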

\begin{proof}
Assume that a sequence $(X_k)_{k\geq 1}$  coverges to $[F]\in \mathcal{PMF}$. Let ${L}_{X_k}=L_k$ and $F_\mu(X_k)=F_k$.
From Thurston's theory \cite{FLP}, there exists a sequence $(c_k)_{k\geq 1}$ of positive numbers such that  $c_k \to 0$ and
\begin{equation}\label{equ:con1}
c_k i(F_k, \lambda) \to i(F, \lambda)
\end{equation}
for each $\lambda\in\mathcal{ML} $.

By the Fundamental Lemma in \cite{P1}, there is  a uniform constant $C>0$ such that
\begin{equation}\label{equ:con2}
 i(F_k, \lambda) \leq  \ell_\lambda(X_k) \leq i(F_k, \lambda)+C.
\end{equation}
Note that $$c_k \ell_\lambda(X_k) \leq c_kL_k \ell_\lambda(X_0)$$
and for each $L_k$, there is a measured lamination $\lambda_k$ with $\ell_{\lambda_k}(X_0)=1$ such that $L_k= \ell_{\lambda_k}(X_k)$. As a result, we have
\begin{equation}\label{equ:con3}  \frac{c_k\ell_\lambda(X_k) }{\ell_\lambda(X_0)}  \leq c_kL_k\leq c_k \ell_{\lambda_k}(X_k).\end{equation}
From $(\ref{equ:con1})$, $(\ref{equ:con2})$, $(\ref{equ:con3})$, it follows that $c_kL_k$ is uniformly bounded from above and  uniformly bounded below away from zero. It follows that, up to  a subsequence,
$\frac{\ell_\lambda(X_k) }{L_k} $ converges to a positive multiple of $i(F, \lambda)$
for each $\lambda\in\mathcal{ML} $. Since pointed convergence of $\frac{\ell_\lambda(X_k) }{L_k} , \lambda\in\mathcal{ML} $ is equivalent to uniform convergence on compact subsets of $\mathcal{ML}$, property $(\star)$  holds.

\end{proof}

Concerning the right hand side formula in (x), a similar result was obtaind by Miyachi in \cite{Miyachi} :
\begin{proposition}
If a  sequence $(X_k)_{k\geq 1}$  in $ \mathcal{T}(S)$ coverges to a limit $P$ in Gardiner-Masur's boundary, then $\mathcal{E}_{X_k}(\cdot)$ converges to some function  $\mathcal{E}_P(\cdot)$ in the following sense: Up to a subsequence, $\mathcal{E}_{X_k}(\cdot)$ converges to a positive multiple of $\mathcal{E}_P(\cdot )$ uniformly on any compact subset of $ \mathcal{ML}$.

\end{proposition}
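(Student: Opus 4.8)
The plan is to run the precise analogue of the proof of the preceding Proposition, with hyperbolic length replaced by the square root of extremal length and Thurston's boundary replaced by the Gardiner--Masur boundary, and to deduce the statement from an Arzel\`a--Ascoli argument for the normalized functions $\mathcal{E}_{X_k}$. Throughout I identify $\mathcal{ML}$ with $\mathcal{MF}$ and use that $\mathrm{Ext}^{1/2}_{X}$ extends to a continuous, positively homogeneous and \emph{subadditive} function on $\mathcal{MF}$ --- the subadditivity being immediate from the length--area definition of extremal length, since $\ell_{\rho}$ is additive on the cone $\mathcal{MF}$ --- so that every $\mathrm{Ext}^{1/2}_{X}$ is a weak norm on $\mathcal{MF}$. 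In particular $\mathrm{Ext}^{1/2}_{X_0}$ is proper, and the set $\Sigma=\{\lambda\in\mathcal{MF}:\mathrm{Ext}^{1/2}_{X_0}(\lambda)=1\}$ is compact, being carried homeomorphically onto $\mathcal{PMF}$ by the projection $\mathcal{MF}\setminus\{0\}\to\mathcal{PMF}$. The first step is the normalization identity: Kerckhoff's formula for the Teichm\"uller metric (entry (iv) of the table above) gives $K_X^{1/2}=\sup_{\lambda}\mathrm{Ext}^{1/2}_X(\lambda)/\mathrm{Ext}^{1/2}_{X_0}(\lambda)$, whence $0\le\mathcal{E}_X(\lambda)\le\mathrm{Ext}^{1/2}_{X_0}(\lambda)$ for all $\lambda\in\mathcal{MF}$, with $\sup_{\lambda\in\Sigma}\mathcal{E}_X(\lambda)=1$.

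Next I would establish precompactness of the family $\{\mathcal{E}_{X_k}\}$. By the bounds just stated these functions are uniformly bounded on every compact subset of $\mathcal{MF}$; and since each $\mathcal{E}_{X_k}$ is a weak norm dominated by the single fixed weak norm $\mathrm{Ext}^{1/2}_{X_0}$, expressing everything in a fixed finite system of (train-track) coordinates shows that the family is equi-Lipschitz, hence equicontinuous, on compact sets. By Arzel\`a--Ascoli, after passing to a subsequence we have $\mathcal{E}_{X_k}\to\psi$ uniformly on every compact subset of $\mathcal{MF}$, for some continuous, positively homogeneous $\psi\colon\mathcal{MF}\to\mathbb{R}_{\geq 0}$. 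Moreover $\psi\not\equiv 0$, because the compactness of $\Sigma$ together with the equality $\sup_{\Sigma}\mathcal{E}_{X_k}=1$ gives $\sup_{\Sigma}\psi=\lim_{k}\sup_{\Sigma}\mathcal{E}_{X_k}=1$.

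It then remains to identify $\psi$ with a positive multiple of $\mathcal{E}_P$. By the definition of the Gardiner--Masur boundary \cite{Miyachi} --- it is the set of accumulation points of the embedding $X\mapsto[\mathrm{Ext}^{1/2}_X(\cdot)]$ of $\mathcal{T}(S)$ into $\mathbb{P}\mathbb{R}_{\geq 0}^{\mathcal{S}}$, and $[\mathcal{E}_X(\cdot)]=[\mathrm{Ext}^{1/2}_X(\cdot)]$ --- the hypothesis $X_k\to P$ furnishes scalars $s_k>0$ with $s_k\,\mathcal{E}_{X_k}(\gamma)\to\mathcal{E}_P(\gamma)$ for every simple closed curve $\gamma$, where the representative $\mathcal{E}_P$ of $P$ is not identically zero on $\mathcal{S}$. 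After a further subsequence assume $s_k\to s\in[0,\infty]$. If $s=0$, then $s_k\mathcal{E}_{X_k}(\gamma)\le s_k\,\mathrm{Ext}^{1/2}_{X_0}(\gamma)\to 0$ for every $\gamma$, forcing $\mathcal{E}_P\equiv 0$ on $\mathcal{S}$, contrary to the choice of $\mathcal{E}_P$. If $s=\infty$, choose $\mu\in\mathcal{S}$ with $\psi(\mu)>0$ (such $\mu$ exists since $\psi\not\equiv 0$ is continuous and homogeneous and weighted simple closed curves are dense in $\mathcal{MF}$); then $s_k\mathcal{E}_{X_k}(\mu)\to\infty$, contradicting $s_k\mathcal{E}_{X_k}(\mu)\to\mathcal{E}_P(\mu)<\infty$. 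Hence $s\in(0,\infty)$, so $\mathcal{E}_P(\gamma)=s\,\psi(\gamma)$ for all $\gamma\in\mathcal{S}$; by density, continuity and homogeneity this gives $\psi=s^{-1}\mathcal{E}_P$ on all of $\mathcal{MF}$, and therefore $\mathcal{E}_{X_k}\to s^{-1}\mathcal{E}_P$, a positive multiple of $\mathcal{E}_P$, uniformly on compact subsets of $\mathcal{ML}\cong\mathcal{MF}$.

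The step I expect to be the main obstacle is the uniformity underlying precompactness --- equivalently, excluding the possibility that the projective classes $[\mathrm{Ext}^{1/2}_{X_k}(\cdot)]$ converge on $\mathcal{S}$ while the rescaling factors $s_k$ degenerate (tend to $0$ or to $\infty$). This is precisely where the metric geometry of extremal length is used: each $\sqrt{\mathrm{Ext}_{X_k}}$ is a weak norm uniformly dominated by the single weak norm $\sqrt{\mathrm{Ext}_{X_0}}$, which at once bounds these functions and controls their modulus of continuity on compact sets independently of $k$. The remaining ingredients --- the identity $K_X^{1/2}=\sup_{\lambda}\mathrm{Ext}^{1/2}_X(\lambda)/\mathrm{Ext}^{1/2}_{X_0}(\lambda)$, the reduction of the supremum to the compact set $\Sigma$, and the density of weighted simple closed curves in $\mathcal{MF}$ --- are routine. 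Alternatively one may simply invoke Miyachi's analysis of the Gardiner--Masur boundary in \cite{Miyachi}, from which the statement follows essentially by reformulation.
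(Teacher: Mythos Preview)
The paper does not supply its own argument for this proposition; it simply attributes the result to Miyachi \cite{Miyachi}. Your closing sentence (``alternatively one may simply invoke Miyachi's analysis\ldots'') is therefore exactly the paper's ``proof''. What you add is an attempted self-contained argument modelled on the proof of the preceding proposition about $\mathcal{L}_{X_k}$, and that attempt deserves separate comment.

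The outline---uniform bound $\mathcal{E}_{X_k}\le \mathrm{Ext}^{1/2}_{X_0}$ from Kerckhoff's formula, precompactness via Arzel\`a--Ascoli, then identification of the limit using the definition of the Gardiner--Masur boundary and density of simple closed curves---is the right shape, and steps~1 and~3 are fine. The difficulty is the equicontinuity step. You claim that $\mathrm{Ext}^{1/2}_X$ is a \emph{weak norm} on $\mathcal{MF}$, with subadditivity ``immediate from the length--area definition \ldots\ since $\ell_\rho$ is additive on the cone $\mathcal{MF}$''. But $\mathcal{MF}$ is not a cone in the additive sense: there is no globally defined sum of measured foliations, so the statement as written does not even type-check. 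Even working locally in a train-track chart $V(\tau)\subset\mathbb{R}^n$, it is not clear that the $\rho$-length functional $L_\rho$ (an infimum over homotopy representatives) is additive in the weights, and without that the subadditivity of $\mathrm{Ext}^{1/2}_X=\sup_\rho L_\rho/A(\rho)^{1/2}$ does not follow. Since your equi-Lipschitz bound is deduced from subadditivity together with the domination by $\mathrm{Ext}^{1/2}_{X_0}$, this leaves a genuine gap: a uniform pointwise bound on a family of continuous homogeneous functions does not by itself yield equicontinuity.

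This gap is where Miyachi's work actually enters. He obtains the needed precompactness on $\mathcal{MF}$ not via a weak-norm argument but by a more delicate analysis (extending the functions from $\mathcal{S}$ to $\mathcal{MF}$ and controlling them uniformly); once that is granted, the rest of your identification argument is correct. So your proposal is essentially a road map that reduces the proposition to the one nontrivial analytic input, but that input is precisely what you have not supplied and what the paper outsources to \cite{Miyachi}.
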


\section{Lipschitz Norm}\label{s:Lipschitz}

This section contains results of Thurston from his paper \cite{Thurston} that we will use later in this paper. We have provided proofs because at times Thurston's proofs in \cite{Thurston} are considered as sketchy.

 A geodesic lamination $\mu$ on a hyperbolic surface $X$ is said to be \emph{complete} if its complementary regions are all isometric to ideal triangles. (We note that we are dealing with laminations $\mu$ that are not necessarily measured, except if specified.) Associated with $(X, \mu)$ is a measured foliation $F_{\mu}(X)$, called the horocyclic foliation, satisfying the following three properties:
\begin{enumerate}[(i)]
\item $F_{\mu}(X)$ intersects $\mu$ transversely, and in each cusp of an ideal triangle in the complement of $\mu$, the leaves of the foliation are pieces of horocycles that make right angles with the boundary of the triangle;
\item  on the leaves of $\mu$, the transverse measure for $F_{\mu}(X)$ agrees with arclength;
\item there is a nonfoliated region at the centre of each ideal triangle of $S\setminus \mu$ whose boundary consists of three pieces of horocycles that are pairwise tangent (see Figure \ref{shrinkfig13bis}).
\begin{figure}[!hbp]
\centering
\psfrag{a}{\small \shortstack{horocycles\\perpendicular\\to the boundary}}
\psfrag{b}{\small \shortstack{horocycle of length 1}}
\psfrag{c}{\small \shortstack{non-foliated\\region}}
\includegraphics[width=0.5\linewidth]{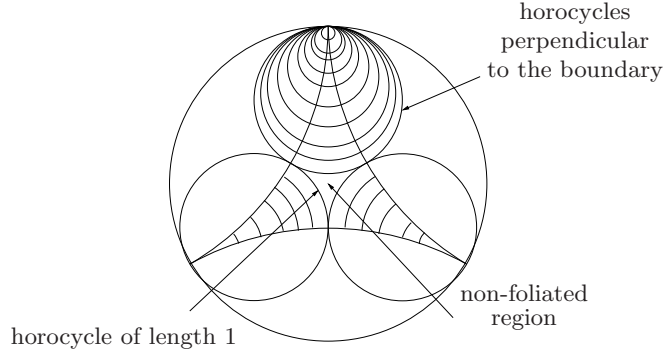}
\caption{\small {The horocyclic foliation of an ideal triangle.}}
\label{shrinkfig13bis}
\end{figure}

 \end{enumerate}
We denote by $\mathcal{MF}(\mu)$ the space of measured foliations that are transverse to $\mu$. Thurston \cite{Thurston} proved the following fundamental result.
\begin{theorem}
The map $\phi_\mu:  \mathcal{T} (S) \to \mathcal{MF}(\mu)$ defined by $ X \mapsto F_\mu(X)$ is a homeomorphism.
\end{theorem}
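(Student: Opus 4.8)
The plan is to show that $\phi_\mu$ is a continuous open bijection, and the geometric core is a precise understanding of $F_\mu(X)$ in a regular neighbourhood of $\mu$. Cutting $S$ along $\mu$ produces ideal triangles, all isometric to the standard ideal triangle of $\mathbb{H}^2$, and on each of them conditions (i)--(iii) determine the foliation \emph{uniquely}: it consists of three families of horocyclic arcs, one around each ideal vertex, meeting perpendicularly the two sides issuing from that vertex, bounded by a central non-foliated region whose three sides are pairwise-tangent horocycles. Consequently the only feature of $F_\mu(X)$ that depends on $X$ is the way these rigid pieces are glued along the leaves of $\mu$, i.e.\ the shear parameters; and, crucially, each shear is encoded in $F_\mu(X)$ itself. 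Indeed, on a leaf $\ell$ of $\mu$ separating two complementary triangles, one can locate, from each side, the point of $\ell$ at which the foliation switches between the two horocyclic families meeting $\ell$; by property~(ii) the transverse $F_\mu(X)$-measure of the segment of $\ell$ between these two points is its hyperbolic arclength, and this signed arclength is exactly the relative shear of the two triangles along $\ell$.

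Injectivity and continuity of $\phi_\mu$ then follow quickly. Since $\mu$ is complete, the point $X\in\mathcal{T}(S)$ is recovered from the shear coordinates of $X$ along $\mu$ --- for a finite $\mu$ this is the classical fact that a complete, ideally triangulated hyperbolic surface is determined by its shearing parameters, and in general it is Thurston's shear (``cataclysm'') parametrization. Passing from $F_\mu(X)$ to its class in $\mathcal{MF}(\mu)$ loses nothing, since the non-foliated regions one collapses are themselves determined; hence $F_\mu(X)=F_\mu(Y)$ forces equal shears and $X=Y$. Continuity of $X\mapsto F_\mu(X)$ is clear, because the shears of $X$, and hence the measured foliation they produce, vary continuously with $X$.

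For surjectivity I would use a soft argument rather than inverting the gluing construction. Both $\mathcal{T}(S)$ and $\mathcal{MF}(\mu)$ are connected manifolds of dimension $6g-6+2n$ ($\mathcal{MF}(\mu)$ being an open subset of $\mathcal{MF}(S)\cong\mathbb{R}^{6g-6+2n}$, by Thurston's train-track parametrization), so by invariance of domain the continuous injection $\phi_\mu$ is open, and $\phi_\mu(\mathcal{T}(S))$ is open in $\mathcal{MF}(\mu)$. It is also closed, because $\phi_\mu$ is proper: the estimate $i(\gamma,F_\mu(X))\le \ell_X(\gamma)\le i(\gamma,F_\mu(X))+C$, which is the Fundamental Lemma of \cite{P1} (a direct computation in the horocyclic model), bounds each hyperbolic length $\ell_X(\gamma)$ by a continuous function of $F_\mu(X)$; so if $F_\mu(X_k)$ converges in $\mathcal{MF}(\mu)$ then the lengths $\ell_{X_k}(\gamma)$ stay bounded for every simple closed curve $\gamma$, which confines $(X_k)$ to a compact subset of $\mathcal{T}(S)$ (the length functions of a filling finite family of curves being proper coordinates). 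Hence $\phi_\mu(\mathcal{T}(S))$ is open and closed in the connected space $\mathcal{MF}(\mu)$, so $\phi_\mu$ is onto; being a continuous open bijection, it is a homeomorphism.

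The main obstacle is the geometric first step: making rigorous the description of the horocyclic foliation near $\mu$ and the extraction of the shear parameters from it. If one prefers to produce the inverse map explicitly --- given $G\in\mathcal{MF}(\mu)$, read off a shear datum from $G$ and glue standard ideal triangles accordingly --- then the delicate point instead becomes showing that the resulting hyperbolic surface is \emph{complete} (of finite area, with a cusp at each puncture), which is where the global consistency of $G$ as a measured foliation on the closed surface must be matched against the holonomy (cusp) condition on shear coordinates. In either approach it is convenient to treat first the case of a finite $\mu$, i.e.\ an ideal triangulation, where every step is completely explicit, and only afterwards a general complete lamination.
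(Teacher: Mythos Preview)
The paper does not actually prove this theorem: it is stated as a ``fundamental result'' of Thurston and attributed to \cite{Thurston}, with no argument supplied. So there is nothing in the paper to compare your proposal against; your write-up is in effect a sketch of Thurston's own proof.

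Your outline is broadly the standard one, and the identification of the horocyclic foliation with the shear data along $\mu$ is the right geometric core. Two points deserve care, however. First, your surjectivity argument hinges on the assertion that $\mathcal{MF}(\mu)$ is a \emph{connected} manifold of the right dimension. Openness in $\mathcal{MF}$ is clear (transversality is open), but connectedness is not obvious a priori and is essentially equivalent to what you are trying to prove; if you cannot establish it independently, the open-and-closed argument collapses. This is why Thurston constructs the inverse map directly rather than appealing to invariance of domain, and you yourself flag that alternative at the end. Second, for a general complete lamination $\mu$ (uncountably many leaves, no isolated leaf to read a single shear from), the passage from ``$F_\mu(X)$ determines the shears'' to ``the shears determine $X$'' requires Thurston's transverse cocycle/cataclysm formalism in full; invoking it by name is fine, but the reader should be told that this is where the real work lies, not merely in the finite-triangulation case.

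The properness step via the Fundamental Lemma of \cite{P1} is a nice touch and is logically independent of the theorem, so that part is sound.
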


The  \emph{stretch line} directed by $ \mu$ and passing through $X\in  \mathcal{T} (S)$ is the curve
$$\mathbb{R} \ni t \mapsto X_t= \phi_\mu^{-1}(e^t F_\mu(X)).$$
We call a segment of a stretch line \emph{a stretch path}.

Suppose that $\mu$ is the support of a measured geodesic lamination $\lambda$.
Then, for any two points $X_s, X_t, s\leq t$ on the stretch line, their Lipschitz distance $d_L(X_s, X_t)$ is equal to $t-s$, and this distance is realized by
$$\log  \frac{\ell_{\lambda} (X_t)}{\ell_{\lambda} (X_s)}.$$

We denote by $\mathcal{ML}$ the space of measured geodesic laminations on $X$ and we let $\mathcal{ML}_1= \{\lambda\in \mathcal{ML}  \  | \  \ell_\lambda(X)=1 \}$.  We may identify $\mathcal{ML}_1$ with $\mathcal{PL}$, the space of projective measured laminations.

 Thurston \cite{Thurston} introduced a Finsler structure on $\mathcal{T}(S)$ by defining the Finsler norm of a tangent vector $V\in T_X\mathcal{T}(S)$  by the following formula :
\begin{equation}\label{eq:F}
\| V \|_L= \sup_{\lambda \in \mathcal{ML}} \frac{d \ell_{\lambda}(V)}{\ell_{\lambda}(X)} .
\end{equation}
Note that we may write
$$\| V \|_L=\sup_{\lambda \in \mathcal{ML}_1} d \ell_{\lambda}(V).$$

\begin{lemma}\label{lem:st}
Let $\Gamma_\mu(t)$  be a stretch line through $X$  with $\Gamma_\mu(0)=X$
and $\dot\Gamma_\mu(0)=V_0$.
For any  measured lamination  $\lambda$ supported by $\mu$ ($\lambda$ may be not unique), we have
$$\| V_0 \|_L= \frac{d \ell_{\lambda}(V_0)}{\ell_{\lambda}(X)} .$$

\end{lemma}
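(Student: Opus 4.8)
The plan is to show two inequalities. The inequality $\|V_0\|_L \geq \frac{d\ell_\lambda(V_0)}{\ell_\lambda(X)}$ is immediate from the definition~(\ref{eq:F}), since $\lambda\in\mathcal{ML}$ and the left-hand side is a supremum over all such measured laminations. So the real content is the reverse inequality $\|V_0\|_L \leq \frac{d\ell_\lambda(V_0)}{\ell_\lambda(X)}$, which says that the stretch direction realizes the supremum, with the maximizing lamination being (any) $\lambda$ supported on $\mu$.

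First I would recall the key property of stretch lines stated just above the lemma: if $\mu$ supports the measured lamination $\lambda$, then along the stretch line $\Gamma_\mu(t)$ through $X$ we have $d_L(X,\Gamma_\mu(t)) = t$ for $t\geq 0$, and this distance equals $\log\big(\ell_\lambda(\Gamma_\mu(t))/\ell_\lambda(X)\big)$. Differentiating the identity $\log\ell_\lambda(\Gamma_\mu(t)) - \log\ell_\lambda(X) = t$ at $t=0$ gives $\frac{d\ell_\lambda(V_0)}{\ell_\lambda(X)} = 1$. On the other hand, the Finsler norm $\|\cdot\|_L$ is by construction the infinitesimal form of the metric $d_L$ (this is Thurston's theorem, recalled in the introduction), so the length of the stretch path from $\Gamma_\mu(0)$ to $\Gamma_\mu(t)$ is $\int_0^t \|\dot\Gamma_\mu(s)\|_L\,ds \geq d_L(X,\Gamma_\mu(t)) = t$; conversely, since stretch lines are geodesics for $d_L$, this length equals $t$ exactly, forcing $\|\dot\Gamma_\mu(s)\|_L = 1$ for all $s$ (after checking the parametrization is the one for which the length of $[\Gamma_\mu(0),\Gamma_\mu(t)]$ is $t$, which is exactly what the displayed property says). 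In particular $\|V_0\|_L = \|\dot\Gamma_\mu(0)\|_L = 1$. Combining the two computations, $\|V_0\|_L = 1 = \frac{d\ell_\lambda(V_0)}{\ell_\lambda(X)}$, which is the assertion.

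I should be slightly careful about one point: the displayed property preceding the lemma is stated for $X_s, X_t$ with $s\leq t$ on the stretch line, i.e. it treats the forward direction, and it gives both that $d_L(X_s,X_t)=t-s$ and that this is realized by the logarithmic length ratio of $\lambda$. Thus for any $t>0$ both $d_L(X,\Gamma_\mu(t))=t$ and $\log\frac{\ell_\lambda(\Gamma_\mu(t))}{\ell_\lambda(X)}=t$ hold simultaneously, and these are exactly the two facts I differentiate. The independence of the answer from the choice of $\lambda$ supported on $\mu$ is then automatic: every such $\lambda$ satisfies $\frac{d\ell_\lambda(V_0)}{\ell_\lambda(X)}=1=\|V_0\|_L$.

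The main obstacle, or rather the point that needs the most care, is justifying that the Finsler norm really is the derivative of the distance along the stretch line — that is, that $\|V_0\|_L = \lim_{t\to 0^+}\frac{d_L(\Gamma_\mu(0),\Gamma_\mu(t))}{t}$ — and that differentiating $t\mapsto \log\ell_\lambda(\Gamma_\mu(t))$ at $0$ is legitimate, i.e. that $\ell_\lambda$ is $C^1$ along the stretch line (which is standard, since length functions are real-analytic on Teichmüller space and stretch lines are smooth curves). Once these smoothness/first-variation facts are in hand, the proof reduces to the two one-line differentiations above.
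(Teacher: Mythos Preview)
Your argument is circular. To obtain $\|V_0\|_L=1$ you invoke ``Thurston's theorem'' that $\|\cdot\|_L$ is the infinitesimal form of $d_L$, so that stretch lines, being $d_L$-geodesics, must have unit Finsler speed. But in this paper that statement is Theorem~\ref{thm:lip}, and its proof \emph{uses} Lemma~\ref{lem:st}. The norm $\|\cdot\|_L$ is not ``by construction'' the infinitesimal form of $d_L$: it is \emph{defined} by the supremum~(\ref{eq:F}), and the equality of the induced length metric with $d_L$ is exactly what Theorem~\ref{thm:lip} asserts. So you cannot cite it here.

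Concretely, the inequality $\int_0^t\|\dot\Gamma_\mu(s)\|_L\,ds \geq d_L(X,\Gamma_\mu(t))$ is fine and follows directly from~(\ref{eq:F}) and~(\ref{eq:LL}). The problematic step is the converse: ``since stretch lines are geodesics for $d_L$, this length equals $t$ exactly.'' That stretch lines are $d_L$-geodesics means only $d_L(\Gamma_\mu(0),\Gamma_\mu(t))=t$; it says nothing about the integral $\int_0^t\|\dot\Gamma_\mu(s)\|_L\,ds$ unless you already know this integral computes $d_L$ along geodesics, which is Theorem~\ref{thm:lip} again.

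The paper's proof avoids this by bounding the supremum in~(\ref{eq:F}) directly. The stretch map $X\to\Gamma_\mu(t)$ has Lipschitz constant $e^t$, so $\ell_\gamma(\Gamma_\mu(t))\leq e^t\ell_\gamma(X)$ for \emph{every} $\gamma\in\mathcal{ML}$; differentiating at $t=0$ gives $d\log\ell_\gamma(V_0)\leq 1$ for every $\gamma$, hence $\|V_0\|_L\leq 1$. For $\gamma=\lambda$ the Lipschitz bound is attained identically, so $d\log\ell_\lambda(V_0)=1$, giving the equality. This is in effect your argument de-circularized: the appeal to Theorem~\ref{thm:lip} is replaced by the elementary Lipschitz bound on all length ratios, which is available from the definition of stretch maps.
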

\begin{proof}
By multiplying the transverse measure by a constant, we may assume that $\ell_{\lambda}(X)=1$.  The stretch map from $X$ to $\Gamma_\lambda(t)$ is a Lipschitz map with Lipschitz constant $e^t$, and
the hyperbolic length of $\lambda$ in $\Gamma_\lambda(t)$ is equal to $e^t$.
As a result, for any measured lamination $\gamma\in \mathcal{ML}_1$,
$$\ell_{\gamma}(\Gamma_\lambda(t))\leq e^t.$$
It follows that
$$\sup_{\gamma \in \mathcal{ML}_1} {d \ell_{\gamma}(V_0)} = d \ell_{\lambda_0}(V_0).$$
\end{proof}

\begin{theorem}[Thurston \cite{Thurston} p. 20]\label{thm:lip}
$\| \cdot \|_L $ is the infinitesimal norm of Thurston's asymmetric distance
 $d_L$.
\end{theorem}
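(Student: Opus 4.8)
The plan is to establish the identity
$$d_L(X,Y)=\inf_\gamma\int_0^1\|\dot\gamma(t)\|_L\,dt ,$$
where the infimum is taken over all piecewise $C^1$ paths $\gamma\colon[0,1]\to\mathcal{T}(S)$ with $\gamma(0)=X$ and $\gamma(1)=Y$, and then to observe that the pointwise norm is recovered from $d_L$ as a metric derivative, which pins it down uniquely. Before beginning, I would record the properties of $\|\cdot\|_L$ that make both sides meaningful: writing $\|V\|_L=\sup_{\lambda\in\mathcal{ML}_1}d\ell_\lambda(V)$, using the compactness of $\mathcal{ML}_1\cong\mathcal{PL}$, the continuity of $(\lambda,X)\mapsto\ell_\lambda(X)$, and the classical fact that this function is $C^1$ in $X$ with differential depending continuously on $\lambda$, one sees that the supremum is finite and attained, that $\|\cdot\|_L$ is a weak norm (positive homogeneity and subadditivity being immediate from the linearity of $d\ell_\lambda$), and that $t\mapsto\|\dot\gamma(t)\|_L$ is continuous, hence integrable, along any piecewise $C^1$ path.

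For the lower bound, fix a piecewise $C^1$ path $\gamma$ from $X$ to $Y$. For each $\lambda\in\mathcal{ML}$ the function $t\mapsto\ell_\lambda(\gamma(t))$ is positive and piecewise $C^1$, so by the fundamental theorem of calculus
$$\log\frac{\ell_\lambda(Y)}{\ell_\lambda(X)}=\int_0^1\frac{d}{dt}\log\ell_\lambda(\gamma(t))\,dt=\int_0^1\frac{d\ell_\lambda(\dot\gamma(t))}{\ell_\lambda(\gamma(t))}\,dt\le\int_0^1\|\dot\gamma(t)\|_L\,dt ,$$
the last inequality being merely the definition (\ref{eq:F}) of $\|\dot\gamma(t)\|_L$. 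Taking the supremum over $\lambda\in\mathcal{ML}$ on the left and invoking Thurston's formula (\ref{eq:LL}) — together with the observation that, since $\lambda\mapsto\ell_\lambda(Y)/\ell_\lambda(X)$ is continuous on $\mathcal{ML}$ and invariant under scaling of $\lambda$, its supremum over weighted simple closed curves coincides with its supremum over all of $\mathcal{ML}$ — yields $d_L(X,Y)\le\int_0^1\|\dot\gamma(t)\|_L\,dt$, and hence $d_L(X,Y)\le\inf_\gamma\int_0^1\|\dot\gamma(t)\|_L\,dt$.

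For the upper bound I would work with stretch paths. Along a stretch path $t\mapsto X_t$, $t\in[0,T]$, directed by a complete lamination $\mu$, the stretch map $X_s\to X_t$ (for $s\le t$) is $e^{t-s}$-Lipschitz, so $\ell_\lambda(X_t)\le e^{t-s}\ell_\lambda(X_s)$ for every $\lambda\in\mathcal{ML}$; hence $t\mapsto\log\ell_\lambda(X_t)-t$ is non-increasing, $d\ell_\lambda(\dot X_t)/\ell_\lambda(X_t)\le1$, and therefore $\|\dot X_t\|_L\le1$ (and when $\mu$ supports a measured lamination this is also immediate from Lemma \ref{lem:st}, which then gives $\|\dot X_t\|_L=1$). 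Consequently the $\|\cdot\|_L$-length of the stretch path is at most $T$, and $T$ is exactly its $d_L$-length, since a stretch path is a geodesic of $d_L$ parametrized by arc length (Thurston \cite{Thurston}; cf. the discussion of entry (ii) above). For arbitrary $X$ and $Y$, Thurston's theorem provides a $d_L$-geodesic from $X$ to $Y$ that is a finite concatenation of stretch paths; its $\|\cdot\|_L$-length is then at most the sum of the $d_L$-lengths of the pieces, and this sum equals $d_L(X,Y)$ because $d_L$-lengths add along a $d_L$-geodesic. Thus $\inf_\gamma\int_0^1\|\dot\gamma(t)\|_L\,dt\le d_L(X,Y)$, and together with the lower bound this proves the displayed identity; so $d_L$ is Finsler with infinitesimal weak norm $\|\cdot\|_L$. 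To see that $\|\cdot\|_L$ is \emph{the} infinitesimal norm, I would finally note that for any $C^1$ path $\gamma$ the lower bound applied to $\gamma|_{[0,h]}$ together with continuity of $t\mapsto\|\dot\gamma(t)\|_L$ gives $\limsup_{h\to0^+}d_L(\gamma(0),\gamma(h))/h\le\lim_{h\to0^+}\tfrac{1}{h}\int_0^h\|\dot\gamma(t)\|_L\,dt=\|\dot\gamma(0)\|_L$, while testing (\ref{eq:LL}) against a fixed $\lambda$ gives $\liminf_{h\to0^+}d_L(\gamma(0),\gamma(h))/h\ge d\ell_\lambda(\dot\gamma(0))/\ell_\lambda(\gamma(0))$ and hence, taking the supremum over $\lambda$, $\liminf_{h\to0^+}d_L(\gamma(0),\gamma(h))/h\ge\|\dot\gamma(0)\|_L$; so the metric derivative exists and equals $\|\dot\gamma(0)\|_L$.

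The step I expect to be the genuine obstacle is the upper bound, which rests on two non-formal inputs: Thurston's assertion — notoriously terse in \cite{Thurston} — that any two points of $\mathcal{T}(S)$ are joined by a geodesic which is a finite concatenation of forward stretch paths, and the fact that a stretch path is an arc-length-parametrized $d_L$-geodesic, equivalently that the $e^t$-Lipschitz stretch map cannot be improved, which is the substance behind Lemma \ref{lem:st}. The lower bound is soft once one has the $C^1$-regularity of the length functions along paths; the only delicate point there is the passage from simple closed curves to all of $\mathcal{ML}$ in (\ref{eq:LL}), which follows from the continuity and scale-invariance of $\lambda\mapsto\ell_\lambda(Y)/\ell_\lambda(X)$.
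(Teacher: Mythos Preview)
Your proof is correct and follows essentially the same line as the paper's: the lower bound is identical (integrate $d\log\ell_\lambda$ along an arbitrary path and take the supremum over $\lambda$), and the upper bound rests on the same non-trivial input, Thurston's theorem that any two points are joined by a $d_L$-geodesic which is a finite concatenation of stretch segments. The only difference is bookkeeping in the upper bound: the paper invokes the stronger form of Thurston's theorem in which all segments stretch a \emph{common} measured sublamination $\lambda$ (the maximally stretched one) and then uses Lemma~\ref{lem:st} to compute $\int\|\dot\Gamma\|_L$ as the telescoping sum $\sum_i\log\bigl(\ell_\lambda(\Gamma_i(1))/\ell_\lambda(\Gamma_i(0))\bigr)=\log\bigl(\ell_\lambda(Y)/\ell_\lambda(X)\bigr)$, whereas you use only $\|\dot X_t\|_L\le 1$ on each segment together with additivity of $d_L$-length along a geodesic---a slightly softer repackaging of the same content, and your metric-derivative coda is a pleasant addition the paper does not include.
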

\begin{proof}

Recall that the Finsler norm $\| \cdot \|_L$ induces an (asymmetric) distance:
$$d(X,Y)= \inf_{\Gamma} \int_0^1 \| \dot\Gamma \|_L,$$
where the infimum is taken over all piecewise $C^1$ curves $\Gamma:[0,1]\to \mathcal{T}$ in Teichm\"uller space joining $X$ to $Y$.

By compactness of $\mathcal{PML}$, there exists an element $\lambda\in\mathcal{ML}$ such that the distance $d_L(X,Y)$ is attained at $\lambda$, that is,
$$d_L(X,Y)=  \log  \frac{\ell_{\lambda}(Y)}{\ell_{\lambda}(X)} .$$

For any piecewise $C^1$ path $\Gamma: [0, 1] \to \mathcal{T} (S)$ satisfying $\Gamma(0)=X$ and  $\Gamma(1)=Y$, we have
\begin{eqnarray*}
d_L(X,Y) &=&  \log  \frac{\ell_{\lambda}(Y)}{\ell_{\lambda}(X)}  \\
&=& \int_0^1   d \log\ell_{\lambda} ( \dot\Gamma(t) )  \\
&\leq & \int_0^1   \| \dot\Gamma(t) \|_L.
\end{eqnarray*}
Therefore, $d_L(X,Y)\leq d(X,Y). $

To prove the reverse inequality, we use a result proved by Thurston in \cite{Thurston}, namely, that any two points in Teichm\"uller space
can be connected by a finite concatenation of stretch paths, each of which stretches along some common measured geodesic lamination $\lambda$.

As a result, for any distinct $X$ and $Y$ in $\mathcal{T}(S)$, we can assume that there exists a path $\Gamma$ connecting $X$ and $Y$,
with  $\Gamma$ being a concatenation $\Gamma_1 * \cdots * \Gamma_n$ of stretch paths. Up to reparametrization, we may assume that each $\Gamma_i$
is defined on $[0, 1]$. Furthermore, by a result of Thurston, we may assume that each stretch path $\Gamma_i$ is directed by a complete geodesic lamination  $\mu_i$ that contains a common measured lamination $\lambda$, which is the \emph{maximally stretched lamintation} from $X$ to $Y$. This is a consequence of Theorem 8.2 in Thurston \cite{Thurston}, in which Thurston shows  that there is a unique maximal ratio-maximizing  chain recurrent lamination which contains all other ratio-maximizing chain recurrent lamitations.
For our purposes, we take $\lambda$ to be the maximal (with respect to inclusion) measured lamination contained in $\mu(X,Y)$.

 Along each $\Gamma_i$, by Lemma \ref{lem:st}, we have
$$\sup_{\nu \in \mathcal{ML}} \frac{d \ell_{\nu}(\dot\Gamma_i)}{\ell_{\nu}} =  \frac{d \ell_{\lambda}(\dot\Gamma_i)}{\ell_{\lambda}} .$$

Now we have
\begin{eqnarray*}
 \int_\Gamma   \| \dot\Gamma \|_L
&=& \sum_{i=1}^n  \int_{\Gamma_i}   \| \dot\Gamma \|_L  \\
&=& \sum_{i=1}^n  \int_{\Gamma_i}  \frac{d \ell_{\lambda}(\dot\Gamma_i)}{\ell_{\lambda}}  \\
&=& \sum_{i=1}^n   \log  \frac{\ell_{\lambda}(\Gamma_i(1))}{\ell_{\lambda}(\Gamma_i(0))}  \\
&=&  \log  \frac{\ell_{\lambda}(Y)}{\ell_{\lambda}(X)} .
\end{eqnarray*}
For the last equality, note that we have $\Gamma_i(1)=\Gamma_{i+1}(0)$ for each $i$ and $\Gamma_1(0)=X, \Gamma_n(1)=Y$. It follows that $d(X,Y) \leq d_L(X,Y)$.

\end{proof}

\section{ Teichm\"uller Norm}
Extremal length is an important tool in the study of the Teichm\"uller metric. The notion is due to Ahlfors and Beurling \cite{AB1}, see also \cite{AB}. We briefly recall the definition. Given a Riemann surface $X$, a \emph{conformal metric} $\sigma$ on $X$ is a metric that is locally of the form
$\sigma(z)|dz|$ where $z$ is a local holomorphic parameter and  $\sigma(z)\geq 0$ is a Borel measurable function in the local chart.  We define the
$\sigma$-area of $X$ by
$$ A(\sigma)=\int_X \sigma^2(z)|dz|^2 .$$
Given a homotopy class of simple closed curves $\alpha$, its $\sigma$-length is defined by
$$L_\sigma(\alpha)=\inf_{\alpha' }\int_{\alpha'}\sigma(z)|dz|,$$
where the infimum is taken over all essential simple closed curves $\alpha' $ in the homotopy class  $\alpha$. (Note that it follows from the invariance property of the expression $\sigma(z)|dz|$ that the two integrals $A(\sigma)$ and $L_\sigma(\alpha)$ are well defined, that is, they can be computed in the holomorphic local coordinates and the value obtained does not depend on the choice of the coordinates, see \cite{AB}.)

With the above notation, we define the extremal length of $\alpha$ on $X$  by
$$\mathrm{Ext}_\alpha(X)=\sup_\sigma \frac{L_\sigma^2(\alpha)}{A(\sigma)},$$
where  $\sigma(z)|dz|$ ranges over all conformal metrics on $X$  satisfying
$0<A(\sigma)<\infty$.

By a result of Kerckhoff \cite{Kerckhoff}, there is a unique
continuous extension of the extremal length function  to the space of measured foliations $\mathcal{MF}$,
with $\mathrm{Ext}_{a\gamma}(X)=a^2\mathrm{Ext}_{\gamma}(X)$ for  $a>0$ and $\gamma$  a homotopy class of simple closed curves on $X$.
Recall that a tangent vector to Teichm\"uller space at a point $X$ can be represented by a Beltrami differential
$\mu=\mu(z)\frac{d  \bar z}{d z}$ \cite{IT}. (Note that in this section we use the letter $\mu$ to denote a Beltrami differential since this is the traditional notation, although in other sections the same letter is used to denote a lamination. Hopefully, there will be no confusion.)
Using the extremal length function, we define a norm on the tangent space at $X$ by setting:
\begin{equation}\label{eq:E}
\| \mu \|_E= \sup_{\lambda \in \mathcal{MF}} \frac{d  \mathrm{Ext}^{1/2}_{\lambda}(\mu)}{ \mathrm{Ext}^{1/2}_{\lambda}(X)} .
\end{equation}

The following theorem is due to Hubbard-Masur \cite{HM}; we refer to Kerckhoff \cite{Kerckhoff} for a short proof.
\begin{theorem} \label{th:HM}
For any Riemann surface $X$ and for any measured foliation $\lambda$ on $X$, there is exactly one quadratic differential, denoted by $\Phi_\lambda$, whose vertical measured foliation is measure-equivalent to $\lambda$.

\end{theorem}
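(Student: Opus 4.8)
The plan is to deduce the theorem from the fact that the map sending a holomorphic quadratic differential on $X$ to its vertical measured foliation is a homeomorphism. Fix the Riemann surface $X$, of genus $g$ with $n$ punctures, and let $Q(X)$ be the real vector space of holomorphic quadratic differentials on $X$ (holomorphic on the compact model, with at most simple poles at the punctures); by the Riemann--Roch theorem $\dim_{\mathbb R}Q(X)=6g-6+2n=:N$, and by Thurston's parametrization $\mathcal{MF}(S)\cup\{0\}$ is homeomorphic to $\mathbb{R}^{N}$, the quotient $\mathcal{PMF}(S)=\mathcal{MF}(S)/\mathbb{R}_{>0}$ being a sphere $S^{N-1}$. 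Define $v\colon Q(X)\to\mathcal{MF}(S)\cup\{0\}$ by letting $v(q)$ be the vertical foliation of $q$ for $q\neq0$ and $v(0)=0$ (in the local natural coordinate $w=\int\sqrt q$ the vertical foliation is $\{\mathrm{Re}\,w=\mathrm{const}\}$ with transverse measure $|\mathrm{Re}\,\sqrt q\,dz|$; this is a genuine measured foliation on $S$, with a one-pronged singularity near each simple pole, as is allowed on punctured surfaces). The statement of the theorem is exactly that $v$ is a bijection, and I will in fact show it is a homeomorphism.

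First I would record the elementary properties. The map $v$ satisfies $v(t^{2}q)=t\,v(q)$ for every $t>0$, since passing from $q$ to $t^{2}q$ multiplies $\sqrt q$, hence the transverse measure of the vertical foliation, by $t$. The map $v$ is continuous: for a simple closed curve $\gamma$ one has $i\big(v(q),\gamma\big)=\inf_{\gamma'\in[\gamma]}\int_{\gamma'}|\mathrm{Re}\,\sqrt q\,dz|$, which varies continuously (even locally uniformly) with $q$, and the topology of $\mathcal{MF}(S)$ is the one induced by the injection $F\mapsto\big(i(F,\gamma)\big)_{\gamma}$ into $\mathbb{R}^{\mathcal S}$. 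Finally $v(q)=0$ forces $q=0$: a nonzero integrable quadratic differential gives a singular flat metric of positive, finite area with only finitely many singularities, so by the structure theorem its vertical foliation has a maximal cylinder or a minimal component carrying positive transverse measure, whence $i(v(q),\gamma)>0$ for some $\gamma$. These three facts say that $v$ descends to a continuous map $\bar v\colon \widehat Q(X):=(Q(X)\setminus\{0\})/\mathbb{R}_{>0}\to\mathcal{PMF}(S)$ between two spheres of dimension $N-1$.

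The heart of the argument will be the injectivity of $v$, which is precisely the uniqueness part of the theorem. Suppose $q_{1},q_{2}\in Q(X)$ have measure-equivalent vertical foliations. Passing to the double cover on which $\sqrt{q_{i}}$ becomes a holomorphic $1$-form $\phi_{i}$, the hypothesis means that, after isotopy, the closed (signed) $1$-forms $\mathrm{Re}\,\phi_{1}$ and $\mathrm{Re}\,\phi_{2}$ have the same integrals along every arc transverse to the common vertical foliation. Using the Jenkins--Strebel decomposition of the vertical foliation of $q_{1}$ into maximal cylinders and minimal components, one promotes this equality of transverse measures to the pointwise identity $\mathrm{Re}\,\phi_{1}=\pm\,\mathrm{Re}\,\phi_{2}$ along leaves; then $\phi_{1}\mp\phi_{2}$ is a holomorphic $1$-form whose real part vanishes identically, hence equals $0$, so $q_{1}=q_{2}$. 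I expect \emph{this} step --- propagating the equality of the transverse measures to an honest equality of holomorphic forms, uniformly over cylinder and minimal components --- to be the main obstacle; it is where one genuinely uses that $X$ is a fixed conformal structure and that the flat structures are holomorphic. (Equivalently, one may argue via extremal length: $|q|$ is, up to a positive scalar, the extremal metric for the problem defining $\mathrm{Ext}_{v(q)}(X)$, and uniqueness of that extremal metric --- a length--area and Cauchy--Schwarz argument --- gives injectivity of $v$.)

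It remains to assemble the pieces. The map $\bar v\colon S^{N-1}\to S^{N-1}$ is continuous, and injective by the injectivity of $v$ together with the homogeneity $v(t^{2}q)=t\,v(q)$; since $N=6g-6+2n\geq2$ under the hypothesis $3g-3+n>0$, invariance of domain shows $\bar v$ is an open map, so its image is open, while being a continuous image of a compact set it is also closed, and $S^{N-1}$ is connected, so $\bar v$ is surjective. Being a continuous bijection from a compact space to a Hausdorff space, $\bar v$ is therefore a homeomorphism. Combining this with $v(t^{2}q)=t\,v(q)$, $v(0)=0$, and continuity of $v$ at the origin, $v$ itself is a homeomorphism from $Q(X)$ onto $\mathcal{MF}(S)\cup\{0\}$. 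In particular, for each $\lambda\in\mathcal{MF}(S)$ there is exactly one $q\in Q(X)$ with $v(q)=\lambda$, and this is the quadratic differential $\Phi_{\lambda}$ of the statement.
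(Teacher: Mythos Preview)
The paper does not prove this theorem at all: it is quoted as a result of Hubbard--Masur, with a pointer to Kerckhoff for a short argument, and is used as a black box thereafter. So there is no ``paper's own proof'' to compare against; your write-up is essentially a reconstruction of the Kerckhoff-style proof the paper alludes to (continuity plus homogeneity of the vertical-foliation map, injectivity, then invariance of domain between spheres of dimension $6g-6+2n-1$).

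On the substance of your sketch: the global architecture is correct, and you rightly isolate injectivity of $v$ as the crux. Your \emph{second} route to injectivity --- identifying $|q|^{1/2}|dz|$ as the extremal metric for $\mathrm{Ext}_{v(q)}(X)$ and invoking uniqueness of the extremal metric via a length--area/Cauchy--Schwarz argument --- is the standard and clean one; it is exactly in the spirit of the Gardiner and Kerckhoff references the paper cites. Your \emph{first} route, however, has a genuine gap. Measure-equivalence in $\mathcal{MF}$ is equivalence up to isotopy \emph{and Whitehead moves}, not isotopy alone, so you cannot in general arrange a literal ``common vertical foliation'' for $q_1$ and $q_2$ on which to compare $\mathrm{Re}\,\phi_1$ and $\mathrm{Re}\,\phi_2$ pointwise; moreover, equal intersection numbers with simple closed curves give you equalities of infima of $\int|\mathrm{Re}\,\phi|$, not equalities of signed periods, so the passage from transverse measures to the cohomology class of $\mathrm{Re}\,\phi$ needs more than you have written. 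I would drop that paragraph and run the extremal-length uniqueness argument instead; once $|q_1|=|q_2|$ you get $q_1=e^{i\theta}q_2$, and the $\theta$-rotation of the vertical foliation is only measure-equivalent to the original when $\theta=0$, finishing injectivity. The remaining assembly (properness/homogeneity to pass from the sphere-level homeomorphism $\bar v$ back to a homeomorphism $v$, including continuity of $v^{-1}$ at $0$) is routine but worth one explicit sentence.
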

\begin{lemma}[Gardiner \cite{Gardiner}] \label{lem:Gardiner}
The extremal length function $\mathrm{Ext}_{\lambda}$ is differentiable and we have the following formula, called the ``first variational formula":
\begin{equation}\label{equ:Gar}
d \mathrm{Ext}_{\lambda}(\mu)= -2 \mathrm{Re} <\Phi_\lambda, \mu>,
\end{equation}
where  $<\Phi_\lambda, \mu>$ is the natural pairing
$$<\Phi_\lambda, \mu>=\iint_X  \Phi_\lambda(z)\mu(z) d x d y.$$
\end{lemma}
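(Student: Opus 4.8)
The plan is to deduce the lemma from the identity $\mathrm{Ext}_\lambda(Y)=\|\Phi^Y_\lambda\|$, where $\Phi^Y_\lambda$ is the Hubbard--Masur differential of Theorem~\ref{th:HM} on the Riemann surface $Y$ and $\|\Phi\|=\iint_Y|\Phi(z)|\,dx\,dy$ (a form of the minimal norm property for quadratic differentials), combined with a first-order expansion of the right-hand side along a deformation of the complex structure. Fix $X$ and a Beltrami differential $\mu$; for $|t|$ small, let $f^{t\mu}\colon X\to X^{t\mu}$ be the normalized quasiconformal homeomorphism with Beltrami coefficient $t\mu\,d\bar z/dz$, so that $t\mapsto X^{t\mu}$ is a $C^1$ path through $X$ with tangent $\mu$ at $t=0$; it then suffices to show $\mathrm{Ext}_\lambda(X^{t\mu})=\mathrm{Ext}_\lambda(X)-2t\,\mathrm{Re}\langle\Phi_\lambda,\mu\rangle+o(t)$, with $\Phi_\lambda=\Phi^X_\lambda$. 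Since extremal length extends continuously from weighted simple closed curves to $\mathcal{MF}$ (Kerckhoff) --- and all first-order estimates below can be made uniform --- I would first reduce to $\lambda=w\gamma$, $\gamma$ a simple closed curve, in which case $\Phi_\lambda$ is the Jenkins--Strebel differential with a single cylinder $\mathcal C$ around $\gamma$ and $\mathrm{Ext}_\lambda(X)=\|\Phi_\lambda\|=w^2/\mathrm{Mod}(\mathcal C)$.

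The crucial point is that in a natural coordinate $\zeta=\xi+i\eta$ of $\Phi_\lambda$ --- in which $\mathcal C$ is a Euclidean cylinder and $\Phi_\lambda=d\zeta^2$ --- the Beltrami coefficient of $f^{t\mu}$ becomes $t\widetilde\mu=t\,\mu\,\Phi_\lambda/|\Phi_\lambda|$, and since $d\xi\,d\eta=|\Phi_\lambda|\,dx\,dy$ one has $\iint_{\mathcal C}\widetilde\mu\,d\xi\,d\eta=\iint_X\mu\Phi_\lambda\,dx\,dy=\langle\Phi_\lambda,\mu\rangle$; this is why $\mathrm{Re}\langle\Phi_\lambda,\mu\rangle$, and not $\iint_X|\mu|\,|\Phi_\lambda|\,dx\,dy$, is what governs the variation. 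For the upper bound I would push $\mathcal C$ forward by $f^{t\mu}$ to an embedded annulus in $X^{t\mu}$ isotopic to $\gamma$, so that $\mathrm{Ext}_\lambda(X^{t\mu})\le w^2/\mathrm{Mod}\bigl(f^{t\mu}(\mathcal C)\bigr)$; this reduces matters to the first-order expansion of the modulus of the image of a Euclidean cylinder under a quasiconformal map with small Beltrami coefficient $t\widetilde\mu$, for which the classical variation-of-moduli formula gives $\mathrm{Mod}\bigl(f^{t\mu}(\mathcal C)\bigr)=\mathrm{Mod}(\mathcal C)+\tfrac{2t}{P^2}\mathrm{Re}\iint_{\mathcal C}\widetilde\mu\,d\xi\,d\eta+o(t)$, $P$ being the circumference. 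Substituting and simplifying yields $\mathrm{Ext}_\lambda(X^{t\mu})\le\mathrm{Ext}_\lambda(X)-2t\,\mathrm{Re}\langle\Phi_\lambda,\mu\rangle+o(t)$.

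For the matching lower bound, I would use that $\mathrm{Ext}_\lambda(X^{t\mu})=w^2\sup_\rho L_\rho(\gamma)^2/A(\rho)$ over conformal metrics $\rho$ on $X^{t\mu}$, so it is enough to exhibit one near-extremal competitor. The right choice is the conformal metric on $X^{t\mu}$ which in the coordinate $f^{t\mu}(\zeta)$ equals $|df^{t\mu}(\zeta)|$ (so that it dominates, along curves, the push-forward of the flat metric $|\Phi_\lambda|^{1/2}|dz|$), supported on $f^{t\mu}(\mathcal C)$; a first-order expansion of its $\gamma$-length and its area, again via $\widetilde\mu|\Phi_\lambda|=\mu\Phi_\lambda$, gives $L_\rho(\gamma)^2/A(\rho)=\mathrm{Ext}_\gamma(X)-2t\,\mathrm{Re}\langle\Phi_\lambda,\mu\rangle/w^2+o(t)$. (Alternatively, for general $\lambda\in\mathcal{MF}$ one can bypass the reduction to simple closed curves by carrying out both estimates with the minimal norm property and its dual, applied to $\Phi_\lambda$ transported to $X^{t\mu}$.) Comparing the two bounds, $\mathrm{Ext}_\lambda$ has directional derivative $-2\,\mathrm{Re}\langle\Phi_\lambda,\mu\rangle$ at $X$ in every direction $\mu$; as this is $\mathbb R$-linear in $\mu$, it is the differential, giving \eqref{equ:Gar}.

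I expect the main obstacle to be the control of the error terms. One must verify that the remainder in $f^{t\mu}(z)=z+tV(z)+o(t)$ (with $\bar\partial V=\mu$) is uniform enough to survive both the change of variables and the two optimizations above --- this is exactly where the Ahlfors--Bers theory enters, providing real-analytic dependence of $f^{t\mu}$ on $t$ with a controlled first variation --- and that the cross term is collected with coefficient $2$ and with $\mathrm{Re}(\mu\Phi_\lambda)$ rather than its modulus, which is why passing to the natural coordinate of $\Phi_\lambda$ is the heart of the matter rather than a cosmetic step. A further point is that the reduction from a general $\lambda\in\mathcal{MF}$ to weighted simple closed curves requires the first-order estimates to be uniform as $\gamma$ runs over curves approximating $\lambda$, which can be arranged since $\Phi_\lambda$ depends continuously on $\lambda$ and all the constants above are controlled by $\|\mu\|_\infty$ and $\|\Phi_\lambda\|$.
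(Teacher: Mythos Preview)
The paper does not prove this lemma at all: it is stated with an attribution to Gardiner~\cite{Gardiner} and used as a black box, so there is no ``paper's own proof'' to compare against. Your outline is, in substance, the classical argument that Gardiner gave: reduce by density to a weighted simple closed curve, pass to the natural coordinate of the Jenkins--Strebel differential so that the Beltrami coefficient becomes $\widetilde\mu=\mu\,\Phi_\lambda/|\Phi_\lambda|$ and the area element becomes $|\Phi_\lambda|\,dx\,dy$, and then read off the first variation of the cylinder modulus. The key identity $\iint_{\mathcal C}\widetilde\mu\,d\xi\,d\eta=\langle\Phi_\lambda,\mu\rangle$ is exactly the mechanism that produces $\mathrm{Re}\langle\Phi_\lambda,\mu\rangle$ rather than $\iint|\mu|\,|\Phi_\lambda|$, and you identify this correctly.

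Two points deserve care when you write this out in full. First, the sign: the paper's convention (Theorem~\ref{th:HM}) is that $\Phi_\lambda$ has \emph{vertical} foliation $\lambda$, so for $\lambda=w\gamma$ the closed leaves isotopic to $\gamma$ are the vertical trajectories, not the horizontal ones; this flips the roles of circumference and height in your cylinder and hence the sign in the modulus variation formula. With the orientation you describe (horizontal closed leaves) a direct computation with constant $\widetilde\mu$ gives $\delta\mathrm{Mod}=-(2t/P^2)\,\mathrm{Re}\iint\widetilde\mu$, not $+$, so you should re-derive the formula in the paper's convention and check that the final sign is indeed $-2$. Second, as you yourself flag, the passage from weighted simple closed curves to general $\lambda\in\mathcal{MF}$ requires the $o(t)$ terms to be uniform along an approximating sequence; this is handled by the continuity $\lambda\mapsto\Phi_\lambda$ in $L^1$ (Kerckhoff) together with the fact that all constants in the Ahlfors--Bers expansion depend only on $\|\mu\|_\infty$, but it is the step most often glossed over and is worth making explicit.
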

The following observation is due to Kerckhoff \cite{Kerckhoff}:
\begin{lemma}\label{lem:Ker}
With the above notation,
\begin{equation}\label{equ:Ker}
 \mathrm{Ext}_{\lambda}(X)= \iint_X  |\Phi_\lambda(z)| d x d y.
\end{equation}
\end{lemma}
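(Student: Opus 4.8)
The plan is to establish the identity $(\ref{equ:Ker})$ first in the special case where $\lambda$ is a weighted simple closed multicurve, by a classical length--area argument, and then to pass to the general case by continuity. For the reduction, recall that weighted multicurves are dense in $\mathcal{MF}$, that $\lambda\mapsto\mathrm{Ext}_\lambda(X)$ extends continuously to $\mathcal{MF}$ by Kerckhoff \cite{Kerckhoff}, and that by Theorem \ref{th:HM} (in its sharper form due to Hubbard--Masur \cite{HM}) the assignment $\lambda\mapsto\Phi_\lambda$ is a homeomorphism onto the space of holomorphic quadratic differentials on $X$, so that $\lambda\mapsto\iint_X|\Phi_\lambda|$ is continuous as well. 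Hence it suffices to treat a weighted multicurve $\lambda=\sum_i a_i\gamma_i$.

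In that case $\Phi_\lambda$ is a Jenkins--Strebel differential: its vertical trajectories are closed, and, away from its critical graph, $X$ is a disjoint union of flat cylinders $C_i$, where $C_i$ is swept out by the closed vertical leaves isotopic to $\gamma_i$. Writing $w=u+iv$ for the natural coordinate on $C_i$ (so $\Phi_\lambda=dw^2$ there), $C_i$ is $\{0\le u\le h_i\}$ with $v$ taken modulo $c_i$, where $c_i$ is the flat length of $\gamma_i$ and $h_i$ is the transverse $\lambda$-measure of $\gamma_i$, that is $a_i=h_i$; moreover $\iint_{C_i}|\Phi_\lambda|=c_ih_i$, so $\iint_X|\Phi_\lambda|=\sum_i c_ih_i$.

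For the lower bound we take the extremal metric to be the flat metric $\sigma_0=|\Phi_\lambda|^{1/2}|dz|$, which has area $A(\sigma_0)=\sum_i c_ih_i=\iint_X|\Phi_\lambda|$ and for which the closed vertical leaf is the shortest curve isotopic to $\gamma_i$, of length $c_i$; thus $L_{\sigma_0}(\lambda)=\sum_i a_iL_{\sigma_0}(\gamma_i)=\sum_i h_ic_i=A(\sigma_0)$, and
\[
\mathrm{Ext}_\lambda(X)\ \ge\ \frac{L_{\sigma_0}(\lambda)^2}{A(\sigma_0)}\ =\ A(\sigma_0)\ =\ \iint_X|\Phi_\lambda|.
\]
For the reverse inequality, let $\sigma$ be any conformal metric with $0<A(\sigma)<\infty$, and write $\sigma=\rho\,|dw|$ on $C_i$. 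For almost every $u$ the circle $\{u=\text{const}\}\subset C_i$ is isotopic to $\gamma_i$, so $\int_0^{c_i}\rho\,dv\ge L_\sigma(\gamma_i)$; integrating in $u$ and applying the Cauchy--Schwarz inequality gives $h_iL_\sigma(\gamma_i)\le\iint_{C_i}\rho\,du\,dv\le A(\sigma|_{C_i})^{1/2}(c_ih_i)^{1/2}$. Summing over $i$ and using Cauchy--Schwarz once more,
\[
L_\sigma(\lambda)=\sum_i a_iL_\sigma(\gamma_i)\ \le\ \sum_i A(\sigma|_{C_i})^{1/2}(c_ih_i)^{1/2}\ \le\ A(\sigma)^{1/2}\Big(\iint_X|\Phi_\lambda|\Big)^{1/2},
\]
so $L_\sigma(\lambda)^2/A(\sigma)\le\iint_X|\Phi_\lambda|$; taking the supremum over $\sigma$ yields $\mathrm{Ext}_\lambda(X)\le\iint_X|\Phi_\lambda|$. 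Together with the lower bound this proves $(\ref{equ:Ker})$ for multicurves, and the reduction above then gives it for every $\lambda\in\mathcal{MF}$.

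The computations themselves are routine; the points that deserve the most care are, first, the passage from multicurves to arbitrary measured foliations, which relies on having \emph{both} sides continuous on $\mathcal{MF}$ (Kerckhoff's continuous extension of extremal length, and the continuity built into the Hubbard--Masur correspondence), and second, the correct use, in the multicurve case, of the extremal-metric description $\mathrm{Ext}_\lambda(X)=\sup_\sigma L_\sigma(\lambda)^2/A(\sigma)$ with $L_\sigma(\lambda)=\sum_i a_iL_\sigma(\gamma_i)$, together with the identification of the closed vertical trajectory as the $\sigma_0$-geodesic in its isotopy class inside the flat cylinder. I expect this bookkeeping, rather than any genuine difficulty, to be the main thing to get right.
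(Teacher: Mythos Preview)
Your proof is correct and follows essentially the same strategy as the paper's (which it attributes to Ivanov): reduce by density and continuity to weighted simple closed curves, then compute both sides for the associated Jenkins--Strebel differential. The only differences are cosmetic: the paper reduces to a \emph{single} weighted curve $a\gamma$ (already dense in $\mathcal{MF}$, so multicurves are unnecessary) and, rather than carrying out your length--area argument, simply quotes the Jenkins--Strebel identity $\mathrm{Ext}_\gamma(X)=L_\rho(\gamma)/a$ for the one-cylinder differential of height $a$ and circumference $L_\rho(\gamma)$, from which $\mathrm{Ext}_{a\gamma}(X)=a^2\mathrm{Ext}_\gamma(X)=aL_\rho(\gamma)=\iint_X|\Phi_{a\gamma}|$ is immediate. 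Your version is a bit more self-contained; theirs is shorter.
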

\begin{proof}
The proof we give here is due to Ivanov \cite{Ivanov}. We include it for completeness.

By continuity and the density of weighted simple closed curve in $\mathcal{MF}$, it suffices to prove $(\ref{equ:Ker})$ for the case where $\mu=a\gamma \in \mathcal{MF}$, where $\gamma$ is (the homotopy class of) a simple closed curve and $a >0$.

Let $\Phi$ be the one-cylinder Strebel differential on $X$ determined
by  $a\gamma$. The complement of the vertical critical leaves of $\Phi$ is a cylinder foliated by circles isotopic to $\gamma$. Let us also set $\rho=|\Phi|^{1/2}|dz|$. Then $\rho$ is a flat metric on $S$, with a finite number of singular points, which are conical singularities. Measured in the flat metric $\rho$, the circumference and height of the cylinder are equal to $L_\rho(\gamma)$
and $a$ respectively.
By a theorem of Jenkins-Strebel \cite{Strebel}, the extremal length $\mathrm{Ext}_{X}(\gamma)$ of $\gamma$ is equal to
$$\mathrm{Ext}_{\gamma}(X)=\frac{L_\rho(\gamma)}{a},$$
where $\rho=|\Phi|^{1/2}|dz|$.

 The area $A(\rho)=\iint_X  |\Phi_\lambda(z)| d x d y$ of the cylinder is equal to $aL_\rho(\gamma)$. As a result,
$$\mathrm{Ext}_{a\gamma}(X)=a^2\mathrm{Ext}_{\gamma}(X)=a {L_\rho(\gamma)}=A(\rho).$$

\end{proof}

The next result follows from $(\ref{equ:Gar})$  and $(\ref{equ:Ker})$.

\begin{proposition}\label{pro:equality}
The norm $\| \mu \|_E$ satisfies
\begin{equation}\label{equ:norm}
\| \mu \|_E= \sup_{\lambda \in \mathcal{MF}} \frac{-\mathrm{Re} <\Phi_\lambda, \mu>}{\iint_X  |\Phi_\lambda(z)| d x d y }= \sup_{\| \Phi \|=1} {\mathrm{Re} <\Phi, \mu>},
\end{equation}
where $\Phi$ varies over all holomorphic quadratic differentials $\Phi$.
\end{proposition}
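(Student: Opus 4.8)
The plan is to read off the identity \eqref{equ:norm} directly from the defining formula \eqref{eq:E} together with the two variational lemmas. For $\lambda\neq 0$ we have $\mathrm{Ext}_\lambda(X)>0$, so $\mathrm{Ext}^{1/2}_\lambda$ is differentiable at $X$ and the chain rule gives $d\mathrm{Ext}^{1/2}_\lambda(\mu)=\tfrac12\,\mathrm{Ext}_\lambda(X)^{-1/2}\,d\mathrm{Ext}_\lambda(\mu)$, whence
\[
\frac{d\mathrm{Ext}^{1/2}_\lambda(\mu)}{\mathrm{Ext}^{1/2}_\lambda(X)}=\frac{d\mathrm{Ext}_\lambda(\mu)}{2\,\mathrm{Ext}_\lambda(X)}.
\]
Inserting Gardiner's first variational formula \eqref{equ:Gar} in the numerator and Kerckhoff's identity \eqref{equ:Ker} in the denominator turns the right-hand side into $\dfrac{-\mathrm{Re}\langle\Phi_\lambda,\mu\rangle}{\iint_X|\Phi_\lambda(z)|\,dx\,dy}$, and taking the supremum over $\lambda\in\mathcal{MF}$ yields the first equality in \eqref{equ:norm} (the value $\lambda=0$ contributes $0$ and can be ignored).

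For the second equality I would invoke the Hubbard--Masur correspondence (Theorem \ref{th:HM}). Write $\|\Phi\|=\iint_X|\Phi(z)|\,dx\,dy$ for the $L^1$-norm on the space $Q(X)$ of integrable holomorphic quadratic differentials on $X$. The assignment $\lambda\mapsto\Phi_\lambda$ is a bijection from $\mathcal{MF}\setminus\{0\}$ onto $Q(X)\setminus\{0\}$: Theorem \ref{th:HM} gives well-definedness and injectivity, while surjectivity holds because every nonzero $\Phi\in Q(X)$ has a vertical measured foliation $\lambda_\Phi\in\mathcal{MF}$, and then $\Phi_{\lambda_\Phi}=\Phi$ by the uniqueness clause of Theorem \ref{th:HM}. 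Since the ratio $\dfrac{-\mathrm{Re}\langle\Phi_\lambda,\mu\rangle}{\|\Phi_\lambda\|}$ is unchanged under $\Phi_\lambda\mapsto\Phi_\lambda/\|\Phi_\lambda\|$, one gets
\[
\sup_{\lambda\in\mathcal{MF}}\frac{-\mathrm{Re}\langle\Phi_\lambda,\mu\rangle}{\|\Phi_\lambda\|}=\sup_{\Phi\in Q(X),\ \|\Phi\|=1}\bigl(-\mathrm{Re}\langle\Phi,\mu\rangle\bigr).
\]
Finally, $Q(X)$ is a vector space, so $\Phi\mapsto-\Phi$ preserves the unit sphere and carries $-\mathrm{Re}\langle\Phi,\mu\rangle$ to $\mathrm{Re}\langle\Phi,\mu\rangle$; hence the last supremum equals $\sup_{\|\Phi\|=1}\mathrm{Re}\langle\Phi,\mu\rangle$, which is the right-hand side of \eqref{equ:norm}.

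I do not expect a serious obstacle: once Lemmas \ref{lem:Gardiner} and \ref{lem:Ker} and Theorem \ref{th:HM} are in hand, the statement is essentially formal. The only points deserving care are (a) the differentiability of $\mathrm{Ext}^{1/2}_\lambda$ at $X$ for $\lambda\neq0$, needed to justify the chain-rule step, and (b) the fact that $\lambda\mapsto\Phi_\lambda$ attains every nonzero integrable holomorphic quadratic differential, which is precisely what lets one drop the overall sign and normalize to $\|\Phi\|=1$. If one wishes to bypass the surjectivity statement, the two inequalities can be verified separately: ``$\leq$'' follows because for each $\lambda$ the differential $-\Phi_\lambda/\|\Phi_\lambda\|$ lies on the unit sphere, and ``$\geq$'' follows because for a given unit-norm $\Phi$ one may take $\lambda$ to be the vertical foliation of $-\Phi$, giving $\Phi_\lambda=-\Phi$ and $\dfrac{-\mathrm{Re}\langle\Phi_\lambda,\mu\rangle}{\|\Phi_\lambda\|}=\mathrm{Re}\langle\Phi,\mu\rangle$.
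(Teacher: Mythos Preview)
Your argument is correct and is precisely the natural fleshing-out of the paper's one-line justification that the proposition ``follows from \eqref{equ:Gar} and \eqref{equ:Ker}'': you apply the chain rule to pass from $\mathrm{Ext}^{1/2}_\lambda$ to $\mathrm{Ext}_\lambda$, substitute Gardiner's and Kerckhoff's formulas, and then use the Hubbard--Masur bijection together with the symmetry $\Phi\mapsto -\Phi$ to handle the second equality. The paper leaves all of these steps implicit, so your proof is in fact more complete than the original while following the same route.
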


Note that we have  $\| \Phi \|=\iint_X  |\Phi (z)| d x d y$.

\bigskip

Now we consider the  Teichm\"uller metric on $\mathcal{T} (S)$.
 We recall that it is defined by
$$d_T(X,Y):=\frac{1}{2} \inf_f \log K(f)$$
where $f:X \to Y$ is a
quasi-conformal map homotopic to the identity map of $S$ and
$$K(f)=\sup_{x\in X}K_x(f)\geq 1$$
 is the quasi-conformal dilatation of $f$ (the $\sup$ here denotes essential supremum), with
 $$K_x(f)=\frac{|f_z(x)|+|f_{\bar z}(x)|}{|f_z(x)|-|f_{\bar z}(x)|}$$
 being the pointwise quasiconformal dilatation at the point $x\in X$ with local conformal
coordinate $z$.

Teichm\"uller's theorem states that  given any $X,Y \in \mathcal{T}(S)$, there exists a unique
quasi-conformal map $f: X \to Y$, called the Teichm\"uller map, such that
$$d_T(X,Y)=\frac{1}{2} \log K(f).$$
The Beltrami coefficient $\mu:=\displaystyle  \frac{\bar\partial f}{\partial f}$ is of the form $\mu=\displaystyle  k\frac{\bar \Phi}{|\Phi|}$
for some quadratic differential $\Phi$ on $X$ and some constant $k$ with $0\leq k <1$.
In some natural coordinates given by $\Phi$ on $X$ and for some associated quadratic differential $\Phi'$ on $Y$, the Teichm\"uller map
$f$ is given by $f(x+iy)=K^{1/2}x+ i K^{-1/2}y$, where $K=K(f)=\displaystyle  \frac{1+k}{1-k}$.

It is known that the Teichm\"uller metric is a Finsler metric and that between any two points in $\mathcal{T}(S)$ there is exactly one geodesic.
A geodesic ray with initial point $X$ is given by the one-parameter family of Riemann surfaces $\{X_t\}_{t\geq 0}$,
where there is a holomorphic quadratic differential $\Phi$ on $X$ and a family of
Teichm\"uller maps $f_t:X\to X_t$, with initial Beltrami differential $\mu(f_t)=\displaystyle  \frac{e^{2t}-1}{e^{2t}+1}\frac{\bar \Phi}{|\Phi|}$. Here  $\mu(f_t)$ is chosen such that the geodesic ray has unit speed, that is, $d_T(X_s,X_t)=t-s$ for all $s\leq t$.
We also recall the following formula due to Kerckhoff \cite{Kerckhoff}.
\begin{theorem}
Let $X, Y$ be any two points in $\mathcal {T}(S)$. Then
$$d_T(X,Y)=\frac{1}{2} \log \sup_{\lambda} {\frac{\mathrm{Ext}_{\lambda}(Y)}{\mathrm{Ext}_{\lambda}(X)}},$$
where $\lambda$ ranges over elements in $\mathcal{MF}$.
\end{theorem}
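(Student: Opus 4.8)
I would prove the two inequalities between $d_T(X,Y)$ and $\frac{1}{2}\log\sup_{\lambda\in\mathcal{MF}}\mathrm{Ext}_\lambda(Y)/\mathrm{Ext}_\lambda(X)$ separately. Throughout, write $K=e^{2d_T(X,Y)}$ for the dilatation of the Teichm\"uller map $f\colon X\to Y$.

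\emph{The inequality $d_T(X,Y)\ge\frac{1}{2}\log\sup_\lambda\mathrm{Ext}_\lambda(Y)/\mathrm{Ext}_\lambda(X)$.} This follows from the quasi-invariance of extremal length. If $g\colon X\to Y$ is any quasiconformal homeomorphism homotopic to the identity, then, applying the classical distortion estimate for the modulus of a curve family to $g$ and to $g^{-1}$, one gets, for every essential simple closed curve $\gamma$,
\[
K(g)^{-1}\,\mathrm{Ext}_\gamma(X)\ \le\ \mathrm{Ext}_\gamma(Y)\ \le\ K(g)\,\mathrm{Ext}_\gamma(X).
\]
Taking the supremum over $\gamma$ and then the infimum over $g$ gives $\sup_\gamma\mathrm{Ext}_\gamma(Y)/\mathrm{Ext}_\gamma(X)\le\inf_g K(g)=K$. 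Since weighted simple closed curves are dense in $\mathcal{MF}$ and $\mathrm{Ext}_\lambda$ depends continuously on $\lambda\in\mathcal{MF}$ (Kerckhoff), the supremum over $\gamma$ equals the supremum over all $\lambda\in\mathcal{MF}$, and the inequality follows.

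\emph{The reverse inequality.} Here I would exhibit a measured foliation at which the supremum is attained. By Teichm\"uller's theorem there are a holomorphic quadratic differential $\Phi$ on $X$ and an associated quadratic differential $\Phi'$ on $Y$ such that, in the natural coordinates $z=x+iy$ of $\Phi$ and $w=u+iv$ of $\Phi'$, the Teichm\"uller map is $f(x+iy)=K^{1/2}x+iK^{-1/2}y$. Let $\lambda\in\mathcal{MF}$ be the horizontal measured foliation of $\Phi$. Since $\lambda$ is the horizontal foliation of $\Phi$, the quadratic differential on $X$ whose vertical foliation is $\lambda$ is $-\Phi$, so Theorem \ref{th:HM} and Lemma \ref{lem:Ker} give $\mathrm{Ext}_\lambda(X)=\iint_X|\Phi|=\|\Phi\|$. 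As $f$ is homotopic to the identity, the class $\lambda$ is represented on $Y$ by $f_*\lambda$; reading off the affine formula for $f$ shows that $f_*\lambda$ is $K^{1/2}$ times the horizontal foliation of $\Phi'$, hence the quadratic differential on $Y$ whose vertical foliation is $\lambda$ is $-K\Phi'$, and Lemma \ref{lem:Ker} gives $\mathrm{Ext}_\lambda(Y)=K\|\Phi'\|$. Finally, the linear map $(x,y)\mapsto(K^{1/2}x,K^{-1/2}y)$ has Jacobian $1$, so $f$ preserves area in the natural coordinates and $\|\Phi'\|=\|\Phi\|$. Therefore
\[
\frac{\mathrm{Ext}_\lambda(Y)}{\mathrm{Ext}_\lambda(X)}=K=e^{2d_T(X,Y)},
\]
so $\sup_{\mu\in\mathcal{MF}}\mathrm{Ext}_\mu(Y)/\mathrm{Ext}_\mu(X)\ge e^{2d_T(X,Y)}$; combined with the first part, this yields the formula.

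The first inequality is essentially routine (distortion of extremal length together with density and continuity). I expect the second inequality to be the main obstacle: one has to recognize that the supremum is attained at the horizontal foliation of the Teichm\"uller differential, and then to control how its extremal length changes from $X$ to $Y$. The two delicate points there are the $K^{1/2}$-rescaling of the foliation class induced by the homotopy $f\simeq\mathrm{id}$, which feeds quadratically into the Hubbard--Masur differential through Theorem \ref{th:HM}, and the area-preservation of $f$ in the natural coordinates, which is what identifies $\|\Phi'\|$ with $\|\Phi\|$. Both rely on Teichm\"uller's existence theorem and on Lemma \ref{lem:Ker}.
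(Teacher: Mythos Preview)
The paper does not prove this theorem; it is quoted without proof as Kerckhoff's formula and then used as an input in the proof of Theorem~\ref{thm:teich}. So there is no ``paper's own proof'' to compare against.

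Your argument is correct and is essentially Kerckhoff's original proof. The first inequality is the standard distortion estimate for extremal length under quasiconformal maps, extended from simple closed curves to all of $\mathcal{MF}$ by density and continuity. For the reverse inequality you correctly identify the extremizing foliation as the horizontal foliation $\lambda$ of the initial Teichm\"uller differential $\Phi$: the key computations---that $\Phi_\lambda=-\Phi$ on $X$, that $f_*\lambda$ equals $K^{1/2}$ times the horizontal foliation of $\Phi'$ on $Y$ (so $\Phi_\lambda=-K\Phi'$ there by the quadratic homogeneity in Theorem~\ref{th:HM}), and that $\|\Phi'\|=\|\Phi\|$ because the affine map has Jacobian~$1$---are all right and combine via Lemma~\ref{lem:Ker} to give $\mathrm{Ext}_\lambda(Y)/\mathrm{Ext}_\lambda(X)=K$. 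One minor stylistic point: you could equally well take $\lambda$ to be the \emph{vertical} foliation of $\Phi$ and work directly with $\Phi$ rather than $-\Phi$, which avoids the sign bookkeeping; the computation is symmetric and yields the same ratio~$K$.
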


Now we can prove the following:
\begin{theorem}\label{thm:teich}
 The metric induced by the norm $\| \cdot \|_E$ defined in (\ref{eq:E}) is the  Teichm\"uller metric.
\end{theorem}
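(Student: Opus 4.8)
The plan is to mimic the proof of Theorem \ref{thm:lip}. Writing $d_E$ for the length metric on $\mathcal{T}(S)$ associated to the norm $\|\cdot\|_E$, namely
$$d_E(X,Y)=\inf_\Gamma\int_0^1\|\dot\Gamma(t)\|_E\,dt$$
(the infimum being over piecewise $C^1$ paths $\Gamma\colon[0,1]\to\mathcal{T}(S)$ with $\Gamma(0)=X$, $\Gamma(1)=Y$), I would establish separately the inequalities $d_T\leq d_E$ and $d_E\leq d_T$.

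For $d_T\leq d_E$, I would run the same argument as in the proof of Theorem \ref{thm:lip}, with Thurston's formula (\ref{eq:LL}) replaced by Kerckhoff's formula and hyperbolic lengths replaced by square roots of extremal lengths. Since $\mathrm{Ext}_{a\lambda}=a^2\mathrm{Ext}_\lambda$, the function $\lambda\mapsto\mathrm{Ext}_\lambda(Y)/\mathrm{Ext}_\lambda(X)$ descends to a continuous function on the compact space $\mathcal{PMF}$, so Kerckhoff's formula provides some $\lambda\in\mathcal{MF}$, $\lambda\neq 0$, with $d_T(X,Y)=\log\bigl(\mathrm{Ext}^{1/2}_\lambda(Y)/\mathrm{Ext}^{1/2}_\lambda(X)\bigr)$. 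For any piecewise $C^1$ path $\Gamma$ from $X$ to $Y$, the differentiability of $\mathrm{Ext}_\lambda$ (Lemma \ref{lem:Gardiner}), the chain rule, and the fundamental theorem of calculus give
$$d_T(X,Y)=\int_0^1 d\log\mathrm{Ext}^{1/2}_\lambda(\dot\Gamma(t))\,dt=\int_0^1\frac{d\mathrm{Ext}^{1/2}_\lambda(\dot\Gamma(t))}{\mathrm{Ext}^{1/2}_\lambda(\Gamma(t))}\,dt\leq\int_0^1\|\dot\Gamma(t)\|_E\,dt,$$
the last step being just the definition (\ref{eq:E}) of $\|\cdot\|_E$. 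Taking the infimum over $\Gamma$ yields $d_T(X,Y)\leq d_E(X,Y)$.

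For the reverse inequality I would use the Teichm\"uller geodesic from $X$ to $Y$ as a competitor path. Parametrize it as $\{X_t\}_{t\in[0,d_T(X,Y)]}$ with unit speed, directed by a holomorphic quadratic differential $\Phi$ on $X$ normalized by $\|\Phi\|=1$; from the description of Teichm\"uller geodesics recalled above, the velocity $\dot X_0$ is represented by the Beltrami differential $\mu=\bar\Phi/|\Phi|$. By Proposition \ref{pro:equality}, $\|\mu\|_E=\sup_{\|\Psi\|=1}\mathrm{Re}\,\langle\Psi,\mu\rangle$, and since $|\mu|\equiv 1$ almost everywhere we have $|\langle\Psi,\mu\rangle|\leq\iint_X|\Psi(z)|\,dx\,dy=\|\Psi\|=1$ for every quadratic differential $\Psi$ with $\|\Psi\|=1$, with equality attained at $\Psi=\Phi$ since $\langle\Phi,\mu\rangle=\iint_X|\Phi(z)|\,dx\,dy=1$. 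Hence $\|\dot X_0\|_E=1$; and as each $X_t$ lies on the Teichm\"uller geodesic joining it to $Y$, whose initial direction is again of the form $\bar\Psi_t/|\Psi_t|$, the same computation gives $\|\dot X_t\|_E=1$ for all $t$. Therefore the $\|\cdot\|_E$-length of this path equals exactly $d_T(X,Y)$, so $d_E(X,Y)\leq d_T(X,Y)$, and combining the two inequalities we conclude $d_E=d_T$.

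The purely technical points—attainment of the supremum in Kerckhoff's formula, and legitimacy of the fundamental-theorem-of-calculus step—are handled respectively by compactness of $\mathcal{PMF}$ and by Lemma \ref{lem:Gardiner} (using $\mathrm{Ext}_\lambda>0$ for $\lambda\neq 0$). The one step with real content is the evaluation $\|\bar\Phi/|\Phi|\|_E=1$ of the Finsler norm along a Teichm\"uller direction, and Proposition \ref{pro:equality}—itself a packaging of Gardiner's variational formula (\ref{equ:Gar}) and Kerckhoff's identity (\ref{equ:Ker})—reduces this to the elementary estimate above; so I anticipate no genuine obstacle beyond assembling these ingredients in the right order.
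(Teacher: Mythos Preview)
Your argument is correct. The inequality $d_T\le d_E$ is handled exactly as in the paper: Kerckhoff's formula, compactness of $\mathcal{PMF}$ to select a maximizing $\lambda$, and integration of $d\log\mathrm{Ext}^{1/2}_\lambda$ along an arbitrary path.

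For the reverse inequality $d_E\le d_T$ you take a genuinely different route. The paper stays on the extremal-length side: from the quasiconformal bound $\mathrm{Ext}_\lambda(\Gamma(t))\le e^{2t}\mathrm{Ext}_\lambda(\Gamma(0))$ (valid for \emph{every} $\lambda$) it differentiates at $t=0$ to get $d\log\mathrm{Ext}^{1/2}_\lambda(\dot\Gamma(0))\le 1$, hence $\|\dot\Gamma(t)\|_E\le 1$ directly from the definition \eqref{eq:E}. You instead invoke Proposition \ref{pro:equality} to pass to the dual description $\|\mu\|_E=\sup_{\|\Psi\|=1}\mathrm{Re}\langle\Psi,\mu\rangle$, identify the Teichm\"uller direction as $\mu=\bar\Phi/|\Phi|$, and compute the supremum by the elementary estimate $|\langle\Psi,\mu\rangle|\le\|\Psi\|$ with equality at $\Psi=\Phi$. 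This yields the sharper conclusion $\|\dot X_t\|_E=1$ (not merely $\le 1$), at the cost of relying on Proposition \ref{pro:equality}, i.e.\ on Gardiner's variational formula and Kerckhoff's identity packaged together. The paper's argument is more self-contained for this direction but only gives the inequality; yours is cleaner once Proposition \ref{pro:equality} is in hand and in effect anticipates Corollary \ref{coro:teich}. Both are valid.
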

\begin{proof}
Denote by $d_E$ the length metric on $\mathcal {T}(S)$   induced by the norm $\| \mu \|_E$.
For any $X, Y$  in $\mathcal {T}(S)$, by Kerckhoff's formula, the Teichm\"uller distance is realized by
$$d_T(X,Y)=\frac{1}{2} \log {\frac{\mathrm{Ext}_{\lambda}(Y)}{\mathrm{Ext}_{\lambda}(X)}}$$
for some measured foliation $\lambda$. An argument similar to the one in the first part of the proof of Theorem \ref{thm:lip} shows that $d_T(X,Y)\leq d(X,Y)$.

For the converse, let $\Gamma(t)$ be a Teichm\"uller geodesic connecting $X$ and $Y$. We parametrize $\Gamma$ with unit speed, and define it on the interval $[0, T]$ with $T=d_T(X,Y)$ with $\Gamma(0)=X$ and $\Gamma(T)=Y$.

For each $t$ in $[0,T]$, the Teichm\"uller map between $X=\Gamma(0)$ and $\Gamma(t)$ has quasiconformal dilatation $e^{2t}$. It follows from the geometric definition of a quasiconformal map that for any $\lambda$ in $\mathcal{MF}$, we have
$$\mathrm{Ext}_{\lambda}(\Gamma(t)) \leq e^{2t} \mathrm{Ext}_{\lambda}(\Gamma(0)).$$
Taking square roots, we get
$$\mathrm{Ext}^{1/2}_{\lambda}(\Gamma(t)) \leq e^{t} \mathrm{Ext}^{1/2}_{\lambda}(\Gamma(0))$$
and then
\begin{eqnarray*} \lim_{t\to 0^+} \frac{\mathrm{Ext}^{1/2}_{\lambda}(\Gamma(t))-\mathrm{Ext}^{1/2}_{\lambda}(\Gamma(0))}{t \mathrm{Ext}^{1/2}_{\lambda}(\Gamma(t))}&\leq &
\lim_{t\to 0^+} \frac{e^t\mathrm{Ext}^{1/2}_{\lambda}(\Gamma(0))-\mathrm{Ext}^{1/2}_{\lambda}(\Gamma(0))}{t \mathrm{Ext}^{1/2}_{\lambda}(\Gamma(t))}\\ &=&
\lim_{t\to 0^+} \frac{e^t-1}{t}\\ &=& 1.
\end{eqnarray*}
As a result, $d \log  \mathrm{Ext}^{1/2}_{\lambda} (\dot\Gamma(0)) \leq 1$ and then
$$d \log  \mathrm{Ext}^{1/2}_{\lambda} (\dot\Gamma(t)) \leq 1$$ for all $t\in\mathbb{R}$.

Considering the integral of the norm $\| \cdot \|_E$ along $\Gamma(t)$, we have
$$\int_0^T \| \dot\Gamma(t) \|_E \leq T=d_T(X,Y).$$
This proves that $d(X,Y) \leq d_T(X,Y)$.
\end{proof}

Combining Proposition \ref{pro:equality} and Theorem \ref{thm:teich} , we have the following corollary.
\begin{corollary}\label{coro:teich}
The Teichm\"uller Finsler norm, denoted by $\| \mu \|_T$,  is given by
$$\| \mu \|_T=\sup_{\| \Phi \|=1} \mathrm{Re} <\Phi, \mu>,$$
where $\Phi$ varies over all holomorphic quadratic differentials $\Phi$.

\end{corollary}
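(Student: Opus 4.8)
The plan is to obtain the corollary by combining Proposition \ref{pro:equality} with Theorem \ref{thm:teich}, via the elementary principle that a Finsler norm is uniquely determined by the length metric it induces. Indeed, as recalled above the Teichm\"uller metric $d_T$ is itself a length metric of a Finsler norm, which is by definition $\| \cdot \|_T$; on the other hand, Theorem \ref{thm:teich} asserts that $d_T$ coincides with the length metric $d_E$ obtained by integrating $\| \cdot \|_E$ along piecewise $C^1$ paths. Once one knows that two continuous (weak) norms on $T\mathcal{T}(S)$ inducing the same length metric must agree, it follows that $\| \cdot \|_T = \| \cdot \|_E$, and Proposition \ref{pro:equality} then gives the stated formula.

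The substantive point is therefore the recovery of a norm from its length metric, which I would carry out as follows. Let $\| \cdot \|$ be a continuous weak norm on $T\mathcal{T}(S)$ with associated length metric $d$, fix $X \in \mathcal{T}(S)$ and $V \in T_X\mathcal{T}(S)$, and choose any $C^1$ path $\gamma$ with $\gamma(0) = X$ and $\dot\gamma(0) = V$. Taking $\gamma$ itself as a competitor and using continuity of $t \mapsto \| \dot\gamma(t) \|$ at $0$ gives $\limsup_{t \to 0^+} \frac{1}{t} d(X, \gamma(t)) \le \| V \|$. For the reverse inequality I would pass to a coordinate chart around $X$: by continuity, on a small neighbourhood $U$ the norm dominates $(1-\eta)$ times the constant norm $\| \cdot \|_X$ read in these coordinates, so any path in $U$ from $X$ to $\gamma(t)$ has $\| \cdot \|$-length at least $(1-\eta) \| \gamma(t) - X \|_X$, while paths that leave $U$ have length bounded below independently of small $t$; since $\| \gamma(t) - X \|_X = t \| V \|_X + o(t)$, letting $\eta \to 0$ yields $\liminf_{t \to 0^+} \frac{1}{t} d(X, \gamma(t)) \ge \| V \|$. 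Hence $\| V \| = \lim_{t \to 0^+} \frac{1}{t} d(X, \gamma(t))$, an expression depending only on $d$. Applying this to both $\| \cdot \|_E$ and $\| \cdot \|_T$ and invoking $d_E = d_T$ from Theorem \ref{thm:teich} gives $\| \cdot \|_E = \| \cdot \|_T$; combined with Proposition \ref{pro:equality}, $\| \mu \|_T = \sup_{\| \Phi \|=1} \mathrm{Re} <\Phi, \mu>$, which is the assertion.

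I expect the only genuinely delicate step to be the lower bound in the recovery formula, namely that the infimum of lengths of all paths joining $X$ to $\gamma(t)$ is not appreciably smaller than $t \| V \|_X$; this is where continuity of the weak norm and the local comparison with a fixed norm are used, together with the observation that a path escaping a fixed neighbourhood of $X$ is too long to compete once $t$ is small. The fact that $\| \cdot \|_E$ is not symmetric introduces no extra difficulty, since the argument uses only positive homogeneity, convexity and continuity. Everything else is a formal combination of the two previously established results.
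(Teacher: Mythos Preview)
Your approach is exactly the paper's: the corollary is stated there simply as the combination of Proposition~\ref{pro:equality} and Theorem~\ref{thm:teich}, with no further argument. You supply the one step the paper leaves implicit, namely that a continuous weak Finsler norm is determined by the length metric it induces, and your local-chart recovery argument for $\|V\| = \lim_{t\to 0^+} t^{-1} d(X,\gamma(t))$ is correct and carefully handles the asymmetric case.
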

The above result was already known and can be obtained by using the famous Reich-Strebel inequality \cite{RS}. The result means that the Teichm\"uller  norm $\| \cdot \|_T$ is dual to the $L^1$-norm
$$\| \Phi \|=\iint_X  |\Phi (z)| d x d y$$
on the cotangent space.

\section{Convex embedding of measure foliation space using extremal length}

 Following a usual trend, we shall call the boundary of a convex body a \emph{convex sphere}. Note that the boundary of a convex body is always homeomorphic to a sphere.

 Thurston \cite{Thurston} proved that for any $X\in \mathcal{T} (S)$, the function $d \ell: \mathcal{ML}_1 \to\mathrm{T^*}_X \mathcal{T} (S)$
 defined by $ \lambda\mapsto  d \ell_{\lambda}$  embeds $\mathcal{ML}_1$ as a convex sphere in the cotangent space of Teichm\"uller space at $X$ containing the origin. This embedding is the dual of the boundary of the unit ball in the tangent space representing vectors of norm one for the Finsler structure associated to Thurston's metric. We prove an analogous result for the extremal length function.

\begin{theorem}
Given a point $X$ in $\mathcal{T} (S)$, let $\mathcal{PF}=\mathcal{MF}_1= \{\lambda\in \mathcal{MF}  \  | \  \mathrm{Ext}_\lambda(X)=1 \}$. Then, the function $\mathcal{PF}  \to \mathrm{T^*}_X \mathcal{T} (S)$ defined by
$\lambda\mapsto d \mathrm{Ext}_{\lambda}$  embeds $\mathcal{PF}$ as a convex sphere in $\mathrm{T^*}_X \mathcal{T} (S)$ containing the origin. This sphere is the boundary of the  dual of the unit ball in the tangent space representing vectors of norm one for the Finsler structure associated to the Teichm\"uller metric.
\end{theorem}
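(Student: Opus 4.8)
The plan is to transport the whole question into the finite-dimensional real vector space $Q(X)$ of holomorphic quadratic differentials on $X$, which is canonically identified with $\mathrm{T}^*_X\mathcal{T}(S)$ through the real pairing $\Phi\mapsto\big(\mu\mapsto \mathrm{Re}\langle\Phi,\mu\rangle\big)$, where $\langle\Phi,\mu\rangle=\iint_X\Phi\mu\,dx\,dy$ and $\mu$ runs over the Beltrami differentials representing tangent vectors at $X$. Under this identification the $L^1$-norm $\|\Phi\|=\iint_X|\Phi(z)|\,dx\,dy$ becomes a norm on $\mathrm{T}^*_X\mathcal{T}(S)$, and by Corollary~\ref{coro:teich} its dual norm on $\mathrm{T}_X\mathcal{T}(S)$ is exactly the Teichm\"uller Finsler norm $\|\cdot\|_T$. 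So the first thing I would do is record this dictionary precisely, including that $\|\cdot\|$ is a genuine (symmetric, positive definite) norm, whence its unit ball is a compact convex body with the origin in its interior.

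The second step is to compute the embedding in this dictionary. By Gardiner's variational formula (Lemma~\ref{lem:Gardiner}), $d\,\mathrm{Ext}_\lambda$, viewed in $\mathrm{T}^*_X\mathcal{T}(S)$, is the quadratic differential $-2\Phi_\lambda$; and since $\mathrm{Ext}_\lambda(X)=1$ on $\mathcal{PF}$, the chain rule gives $d\,\mathrm{Ext}_\lambda=2\,d\,\mathrm{Ext}^{1/2}_\lambda$ there, so $d\,\mathrm{Ext}^{1/2}_\lambda$ corresponds to $-\Phi_\lambda$. (The map in the statement and the map $\lambda\mapsto d\,\mathrm{Ext}^{1/2}_\lambda$ attached to the norm formula (\ref{eq:E}) thus differ only by the linear homothety $\Phi\mapsto 2\Phi$ of $\mathrm{T}^*_X\mathcal{T}(S)$, so it is enough to analyse either one.) By the Hubbard--Masur theorem (Theorem~\ref{th:HM}) the assignment $\lambda\mapsto\Phi_\lambda$ is a bijection from $\mathcal{MF}$ onto $Q(X)$ — every nonzero quadratic differential being $\Phi_\lambda$ for its own vertical foliation $\lambda$ — and it is a homeomorphism, with $\Phi_{a\lambda}=a^2\Phi_\lambda$. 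By Lemma~\ref{lem:Ker}, $\mathrm{Ext}_\lambda(X)=\|\Phi_\lambda\|$, so $\lambda\mapsto\Phi_\lambda$ carries $\mathcal{PF}=\{\mathrm{Ext}_\lambda(X)=1\}$ homeomorphically onto the $L^1$-unit sphere $\{\Phi\in Q(X):\|\Phi\|=1\}$; composing with the linear automorphism $\Phi\mapsto-\Phi$ (resp. $\Phi\mapsto-2\Phi$) shows that $\lambda\mapsto d\,\mathrm{Ext}^{1/2}_\lambda$ (resp. $\lambda\mapsto d\,\mathrm{Ext}_\lambda$) embeds $\mathcal{PF}$ onto that sphere (resp. onto the sphere of radius $2$). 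Since $\mathcal{PF}\cong\mathcal{PMF}$ is compact and the map is injective (Hubbard--Masur) into a Hausdorff space, it is automatically a topological embedding, and its image is the boundary of a convex body enclosing the origin.

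The third step identifies this sphere with the announced dual object. Write $B=\{V\in\mathrm{T}_X\mathcal{T}(S):\|V\|_T\le1\}$ for the Teichm\"uller unit ball and $B^\circ=\{\phi\in\mathrm{T}^*_X\mathcal{T}(S):\phi(V)\le1\text{ for all }V\in B\}$ for its polar body (``the dual of the unit ball''). Corollary~\ref{coro:teich} says $\|\mu\|_T=\sup_{\|\Phi\|=1}\mathrm{Re}\langle\Phi,\mu\rangle=\sup_{\Phi\in U}\mathrm{Re}\langle\Phi,\mu\rangle$ with $U=\{\|\Phi\|\le1\}$, i.e. $\|\cdot\|_T$ is the support function of $U$, equivalently the gauge of $U^\circ$, so $B=U^\circ$; the bipolar theorem ($U$ being closed, convex and a neighbourhood of the origin) then gives $B^\circ=U^{\circ\circ}=U$, whence $\partial B^\circ$ is exactly the $L^1$-unit sphere of $Q(X)$. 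Comparing with the second step, the image of $\lambda\mapsto d\,\mathrm{Ext}^{1/2}_\lambda$ is precisely $\partial B^\circ$, and the image of the map in the statement is $\partial(2B^\circ)$, a homothetic copy; in either normalization this is the asserted convex sphere containing the origin, namely the boundary of the dual of the unit ball for the Finsler structure of $\|\cdot\|_T$.

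The main obstacle is not in any single step but in two bookkeeping points that must be handled cleanly. First, upgrading the Hubbard--Masur \emph{bijection} $\lambda\mapsto\Phi_\lambda$ to a \emph{homeomorphism} onto $Q(X)$: this is standard — it follows from the continuous dependence of $\Phi_\lambda$, equivalently of $\mathrm{Ext}_\lambda(X)$, on $\lambda$ (as in Kerckhoff~\cite{Kerckhoff}) together with invariance of domain, since $\mathcal{MF}$ and $Q(X)$ have the same dimension — but it should be stated rather than silently assumed. Second, the scaling factor between $\mathrm{Ext}_\lambda$ and $\mathrm{Ext}^{1/2}_\lambda$, which is why the map written in the statement lands on a dilate of $\partial B^\circ$ rather than on $\partial B^\circ$ itself, the latter being the one literally attached to the Finsler norm (\ref{eq:E}); it would be cleanest to phrase the theorem with $d\,\mathrm{Ext}^{1/2}_\lambda$. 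Everything else is a formal consequence of Lemmas~\ref{lem:Gardiner} and~\ref{lem:Ker}, Theorem~\ref{th:HM} and Corollary~\ref{coro:teich}.
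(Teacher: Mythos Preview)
Your proof is correct and in fact somewhat cleaner than the paper's, but it reaches the conclusion by a genuinely different route.

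The paper does not identify the image of $\lambda\mapsto d\,\mathrm{Ext}_\lambda$ directly with a norm-sphere in $Q(X)$. Instead it argues in three separate stages: continuity of $\lambda\mapsto d\,\mathrm{Ext}_\lambda$ via Kerckhoff's continuity of $\lambda\mapsto\Phi_\lambda$; injectivity via nondegeneracy of the pairing (hence an embedding by invariance of domain); and then, for convexity, a supporting-hyperplane argument: for each $\lambda\in\mathcal{PF}$ the paper exhibits the explicit tangent vector $V_\lambda=-\overline{\Phi_\lambda}/(2|\Phi_\lambda|)$ and uses H\"older's inequality to show $d\,\mathrm{Ext}_\gamma(V_\lambda)\le 1$ with equality iff $\gamma=\lambda$, so every $d\,\mathrm{Ext}_\lambda$ is an extreme point of the convex hull of the image. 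A dimension count \`a la Thurston then forces the image to coincide with the boundary of its convex hull, and a short separation argument places the origin inside.

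Your approach instead exploits the \emph{surjectivity} of the Hubbard--Masur map $\lambda\mapsto\Phi_\lambda$ onto $Q(X)$ together with Lemma~\ref{lem:Ker}: once you know this is a homeomorphism carrying $\mathcal{PF}$ onto $\{\|\Phi\|=1\}$, the image of $\lambda\mapsto d\,\mathrm{Ext}_\lambda=-2\Phi_\lambda$ is literally the $L^1$-sphere of radius $2$, and all the convexity and ``contains the origin'' assertions come for free from general facts about norm balls. The bipolar theorem then gives the duality statement in one line. What you gain is economy and a transparent identification with $\partial B^\circ$ (up to the scale factor you correctly flag); what the paper's argument gains is that it never needs to know the image fills the whole $L^1$-sphere---the extreme-point computation works purely from the inequality side---and the test vector $V_\lambda$ has independent meaning as the tangent to the Teichm\"uller ray determined by $\lambda$. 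Both arguments rest on the same three inputs (Lemmas~\ref{lem:Gardiner}, \ref{lem:Ker}, Theorem~\ref{th:HM}) and on Corollary~\ref{coro:teich}.
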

\begin{proof} We fix a point $X$ in $\mathcal{T}(S)$.
By Gardiner's formula,
$$d \mathrm{Ext}_{\lambda}(\mu)= -2 \mathrm{Re} <\Phi_\lambda, \mu>$$
(see Theorem \ref{th:HM} for the notation).
Kerckhoff proved  that if a sequence  $\lambda_n$ in $ \mathcal{PF}$ converges to $\lambda$  then $\|\Phi_{\lambda_n}-\Phi_\lambda \| \to 0$ (see Page 34-35 of  Kerckhoff's paper \cite{Kerckhoff}, where he proved the result for the case where each $\lambda_n$ is a simple closed curve. Since the subset of weighted simple closed curves is dense in $\mathcal{MF}$, the result we need follows). It follows that the map $d\mathrm{Ext}$ is continuous.

Moreover, since the pairing between quadratic differentials and harmonic Beltrami differentials is nondegenerate \cite{IT}, if $d \mathrm{Ext}_{\lambda}=d \mathrm{Ext}_{\lambda'}$, then $\Phi_{\lambda}=\Phi_{\lambda'}$.
This means that $\lambda=\lambda'$. Therefore, the map $d\mathrm{Ext}$ is injective.

Let $d\mathrm{Ext} (\mathcal{PF})$ be the image of $\mathcal{PF}$ under the map $d \mathrm{Ext}$, that is,
$$d\mathrm{Ext} (\mathcal{PF})=\{d\mathrm{Ext}_{\lambda}\in \mathrm{T^*}_X \mathcal{T} (S) \ | \ \lambda\in \mathcal{PF}\}.$$
Since $\mathcal{PF}$ is homeomorphic to a sphere, by invariance of domain, $d\mathrm{Ext} (\mathcal{PF})$   is open and $d\mathrm{Ext}$ is a homeomorphism between $\mathcal{PF}$ and $d\mathrm{Ext} (\mathcal{PF})$.

For any $\lambda\in \mathcal{PF}$, consider the Beltrami differential $$V_\lambda=-\frac{\overline{\Phi_{\lambda}}}{2|\Phi_\lambda |}.$$
It is a tangent vector in $\mathrm{T}_X \mathcal{T} (S)$. By Gardiner's formula,
\begin{eqnarray*}
d \mathrm{Ext}_\lambda(V_\lambda)&=&-2 \mathrm{Re}<\Phi_\lambda, -\frac{\overline{\Phi_{\lambda}}}{2|\Phi_\lambda |}> \\
&=&\iint_X |\Phi_\lambda| dx dy=\mathrm{Ext}_\lambda(X) =1.
\end{eqnarray*}
As a result, the derivative  $d \mathrm{Ext}_{\lambda}$ in the direction $V_\lambda$  is $1$. Using H\"older's inequality, 
for any $\gamma\in \mathcal{PF}$, we have
\begin{eqnarray*}
d \mathrm{Ext}_\gamma(V_\lambda)&=& \iint \Phi_\gamma \frac{\overline{\Phi_{\lambda}}}{|\Phi_\lambda |} dx dy\\
&\leq &\iint_X |\Phi_\gamma|dx dy =\mathrm{Ext}_\gamma(X)=1.
\end{eqnarray*}
Moreover, equality holds if and only if $\gamma=\lambda$.
Therefore, the derivative $d \mathrm{Ext}_{\gamma}(V_\lambda)$ of any other $\gamma\in  \mathcal{PF}$ is strictly less than $1$. Denote by $C(d\mathrm{Ext} (\mathcal{PF}))$ the convex hull of $d\mathrm{Ext} (\mathcal{PF})$ in $\mathrm{T^*}_X \mathcal{T} (S)$. Thus, $V_\lambda$ defines a non-constant linear functional on the cotangent space which attains its maximal value on $C(d\mathrm{Ext} (\mathcal{PF}))$ at $d \mathrm{Ext}_{\lambda}$. As a result, $d \mathrm{Ext}_{\lambda}$ is an extreme point of  the convex set $C(d\mathrm{Ext} (\mathcal{PF}))$.

We now use Thurston's argument in his proof of Theorem 5.1 of \cite{Thurston}. The set of extreme points of any convex set is a sphere of some dimension. Since the dimension of $d\mathrm{Ext} (\mathcal{PF})$ is one less than the dimension of $\mathrm{T^*}_X \mathcal{T} (S)$, the set of extremal points of $C(d\mathrm{Ext} (\mathcal{PF}))$ coincides with $d\mathrm{Ext} (\mathcal{PF})$. As a  result, $d\mathrm{Ext} (\mathcal{PF})$ is a convex sphere.
To see that the convex set $C(d\mathrm{Ext} (\mathcal{PF}))$ contains the origin in its interior, note that since this convex hull has a nonempty interior, there must be at least one line through the origin of $\mathrm{T^*}_X \mathcal{T} (S)$ which intersects $d\mathrm{Ext} (\mathcal{PF})$ in at least two points.
For each of these points, there is a linear functional which attains its positive maximum value there. It follows that $0$ must separate the two points, so $0$ is a convex combination of them.
\end{proof}

\section{Comparison between the Lipschitz and Teichm\"uller norms}
From an inequality called Wolpert's inequality \cite{Wolpert}, we have $d_L \leq 2d_T$. (In fact, the inequality is contained in Sorvali's paper \cite{Sor}, and it was rediscovered by Wolpert. ) Choi and Rafi \cite{CR} proved that the two metrics are quasi-isometric in any thick part of the Teichm\"uller space, but that there are sequences $(X_n), (Y_n)$ in the thin part, with $d_L(X_n,Y_n)\to 0$.

It is interesting to compare the Lipschitz norm and the Teichm\"uller norm. In particular, we ask the following:
\begin{question} Determine a function $C(\epsilon)$ such that
$$\| V \|_L \leq \| V \|_T \leq C(\epsilon) \| V \|_L $$
for any $V\in T_X\mathcal{T} (S) $ and for all $X\in \mathcal{T}_\epsilon (S)$, the $\epsilon$-thick part of  $\mathcal{T} (S)$.
\end{question}

Since $ \| V \|_L $ (respectively $ \| V \|_T$) is given by the logarithmic derivative of the hyperbolic (respectively extremal) length function of measured laminations, a further study of the relation between hyperbolic and extremal length may give the answer to the above question.

\begin{question} Consider the length-spectrum metric $d_{ls}$ on $\mathcal{T}(S)$. This can be defined as a symmetrization of Thurston's metric\footnote{Note that historically the length spectrum metric was not introduced as a symmetrization. It was  first defined by Sorvali \cite{Sorvali}, before Thurston introduced his  asymmetric metric.}, by the formula
\begin{eqnarray*}
d_{ls}(X,Y)&=& \max\{\log \sup_{\gamma}  \frac{\ell_Y(\gamma)}{\ell_X(\gamma)}, \log \sup_{\gamma}  \frac{\ell_X(\gamma)}{\ell_Y(\gamma)}\}  \\
&=&\max\{d_L(X,Y), d_L(Y,X)\}.  \\
\end{eqnarray*}

Is the length-spectrum metric $d_{ls}$ a Finsler metric ? If yes, give a formula for the infinitesimal norm of a vector on Teichm\"uller space with respect to this Finsler structure.
\end{question}
There are comparisons between the length-spectrum metric and the Teichm\"uller metric. It was shown by Liu and Su \cite{LS} that the divergence of $ d_{ls}$ and $d_T$ is only caused by the action of the mapping class group. In fact, they showed that $d_{ls}$ and $d_T$ are ``almost'' isometric on the moduli space and that the two metrics on the moduli space determine the same  asymptotic  cone.

\begin{question} One can also seek for  results analogous to those presented here for the weak metric  $L^ *$ dual to Thurston's metric, that is, the weak metric on Teichm\"uller space defined by
\[L^*(X,Y)=L(Y,X),\]
as well as for the length spectrum metric $d_{ls}$ on the same space.
\end{question}

 \end{document}